\documentclass[11pt]{article}
\usepackage[letterpaper]{geometry}
\usepackage{amsmath,amsthm,amsfonts,amssymb}
\usepackage{enumerate,color,xcolor}
\usepackage{graphicx}
\usepackage{subfigure}
\usepackage{url}

\numberwithin{equation}{section}
\theoremstyle{plain}

\newtheorem{theorem}{Theorem}

\newtheorem{lemma}[theorem]{Lemma}

\newtheorem{proposition}[theorem]{Proposition}

\newtheorem{remark}[theorem]{Remark}
%\newcommand{\remark}{\noindent {\bf Remark. }}[section]

%\titlespacing{\section}{0pt}{0.2ex}{0ex}
%\titlespacing{\subsection}{0pt}{0.3ex}{0.3ex}
%\linespread{0.96}

\begin{document}

\begin{center}
  \Large \bf Precise deviations for Hawkes processes
\end{center}

\author{}
\begin{center}
{Fuqing Gao}\,\footnote{School of Mathematics and Statistics, Wuhan University, Wuhan 430072, People's Republic of China; fqgao@whu.edu.cn},
  Lingjiong Zhu\,\footnote{Department of Mathematics, Florida State University, 1017 Academic Way, Tallahassee, FL-32306, United States of America; zhu@math.fsu.edu.
  }
\end{center}

\begin{center}
 \today
\end{center}

\begin{abstract}
Hawkes process is a class of simple point processes with self-exciting
and clustering properties. Hawkes process has been widely applied 
in finance, neuroscience, social networks, criminology, seismology, and many
other fields. In this paper, we study precise deviations
for Hawkes processes for large time asymptotics, that strictly extends and improves the
existing results in the literature. Numerical illustrations will also be provided.
\end{abstract}

%%%%%%%%%%%%%%%%%%%%%%%%%%%%%%%%%%%%%%%%%%%%%%%%%%%%%%%%%%%%%%%%%%%%%%%%%%%%%%%%%%%%%%%%%%%%%%%%%%%%%%%%%%%%%%

\section{Introduction}

The Hawkes process is a simple point process $N_{t}$
with the stochastic intensity given by:
\begin{equation}\label{intensity}
\lambda_{t}=\nu+\int_{0}^{t-}h(t-s)dN_{s},
\end{equation}
where $\nu>0$ and $h:\mathbb{R}_{+}\rightarrow\mathbb{R}_{+}$
being locally bounded. We assume that $N_{0}=0$, that is, the Hawkes process
starts at time zero with empty history. 
The Hawkes process is named after Alan Hawkes \cite{Hawkes}.
In the literature $\nu$ is called the baseline intensity,
and $h$ is called the exciting function, or kernel function,
encoding the influence of past events on the intensity. 
Br\'{e}maud and Massouli\'{e} \cite{Bremaud} generalized
the dynamics \eqref{intensity} and the formula for the intensity \eqref{intensity}
by a nonlinear function of $\int_{0}^{t-}h(t-s)dN_{s}$,
and hence came the name nonlinear Hawkes processes.
The original model \eqref{intensity} proposed by Hawkes \cite{Hawkes}
is thus sometimes referred to as the linear Hawkes process.

For the Hawkes process \eqref{intensity}, the occurrence of a jump increases the intensity
of the point process, and thus increases the likelihood of more future jumps.
On the other hand, the intensity declines when there is no occurrence of new jumps. 
The self-exciting and clustering property makes the Hawkes process
very appealing in applications in finance and many other fields. 
The Hawkes process is widely used in the modeling of the limit order books in high frequency trading, see
e.g. Alfonsi and Blanc \cite{AB} for optimal execution, and Abergel and Jedidi \cite{AJ} for ergodicity in Hawkes based limit order books models, and also the modeling of the duration between trades, see e.g. Bauwens and Hautsch \cite{BH} or the arrival process of buy and sell orders, see e.g. Bacry et al. \cite{BacryQF}. 
The Hawkes process also finds applications in dark pool trading \cite{GZ4}.
In the context of credit risk modeling, 
Errais et al. \cite{Errais} used a top down approach using the affine point process,
which includes Hawkes process as a special case.
Giot \cite{Giot}, Chavez-Demoulin et al. \cite{Chavez} tested Hawkes processes in the risk management context.
A\"{i}t-Sahalia et al. \cite{Ait, AitII} used Hawkes processes to model two key aspects of asset prices: clustering
in time and cross sectional contamination between regions.
The Hawkes process 
has also been used to explain the supply and demand microstructure in an interest rate model in Hainaut \cite{Hainaut}.
The applications other than finance include: 
neuroscience, see e.g. \cite{PerniceI,Reynaud},
genome analysis, see e.g. \cite{Gusto, Reynaud}, networks and sociology, see e.g.
\cite{Crane,zhao2015seismic}, queueing theory, see e.g. \cite{GZ3}, insurance, see e.g. \cite{Stabile,ZhuRuin},
criminology, see e.g. \cite{Mohler,Porter},
seismology, see e.g. \cite{OgataII,OgataIII} and many other fields.

In this paper, we consider the linear Hawkes process $N_{t}$ 
with $N_{0}=0$ with the intensity \eqref{intensity}.
We assume throughout this paper that
\begin{equation}\label{assump:throughout}
\Vert h\Vert_{L^{1}}=\int_{0}^{\infty}h(t)dt<1,
\qquad\text{and}\qquad
\int_{0}^{\infty}th(t)dt<\infty.
\end{equation}

Let us first review the limit theorems for linear Hawkes processes in 
the literature. 
It is well known that 
under the assumption $\Vert h\Vert_{L^{1}}<1$, 
there exists a unique stationary Hawkes process defined
on $\mathbb{R}$ with the intensity
$\lambda_{t}=\nu+\int_{-\infty}^{t-}h(t-s)dN_{s}$,
and we have the law of large numbers
$\frac{N_{t}}{t}\rightarrow\frac{\nu}{1-\Vert h\Vert_{L^{1}}}$
as $t\rightarrow\infty$.
Bacry et al. \cite{Bacry} obtained a functional
central limit theorem for multivariate Hawkes process
and as a special case of their result,
\begin{equation}\label{eqn:CLT}
\frac{N_{t}-\frac{\nu}{1-\Vert h\Vert_{L^{1}}}t}{\sqrt{t}}\rightarrow N\left(0,\frac{\nu}{(1-\Vert h\Vert_{L^{1}})^{3}}\right),
\end{equation}
in distribution as $t\rightarrow\infty$
under the assumption that $\int_{0}^{\infty}t^{1/2}h(t)dt<\infty$.
For the central limit theorem \eqref{eqn:CLT}, also see Hawkes and Oakes \cite{HawkesII}.
Moreover, Bordenave and Torrisi \cite{Bordenave} proved that 
$\mathbb{P}(\frac{N_{t}}{t}\in\cdot)$
satisfies a large deviation principle, with the rate function:
\begin{equation}\label{Ifunction}
I(x)=
x\log\left(\frac{x}{\nu+x\Vert h\Vert_{L^{1}}}\right)-x+x\Vert h\Vert_{L^{1}}+\nu,
\end{equation}
if $x\geq 0$ and $I(x)=+\infty$ otherwise.
Note that the rate function in the paper \cite{Bordenave}
is written as the Legendre transform expression, and the formula \eqref{Ifunction}
is first mentioned in \cite{ZhuMDP}. Also notice that
in \cite{Bordenave}, the assumption $\int_{0}^{\infty}th(t)dt<\infty$
is needed, and indeed this assumption is not necessary, see e.g. \cite{KarabashZhu}.
A moderate deviation principle is obtained in \cite{ZhuMDP}
that fills in the gap between the central limit theorem and the large deviation principle.
Other works on the asymptotics of linear Hawkes processes, including
the nearly unstable Hawkes processes, that is, when the $\Vert h\Vert_{L^{1}}$
is close to $1$, see e.g. \cite{JR,JRII},
and the large initial intensity asymptotics for the Markovian case \cite{GZ,GZ2},
and the large baseline intensity asymptotics \cite{GZ3}.

For nonlinear Hawkes processes, \cite{ZhuCLT} studies the central limit theorem,
and \cite{ZhuII} obtains a process-level, i.e., level-3 large deviation principle,
and hence has the scalar large deviations as a by-product. 
An alternative expression for the rate function when the system is Markovian
is obtained in \cite{ZhuI}. Recently, Torrisi \cite{TorrisiI,TorrisiII} studies the rate of convergence
in the Gaussian and Poisson approximations of the simple point processes with stochastic intensity,
which includes as a special case, the nonlinear Hawkes process.

The large deviations \cite{Bordenave} and moderate deviations \cite{ZhuMDP} for linear Hawkes processes
are of the Donsker-Varadhan type, which only gives the leading order term, but not the 
higher order ones. More precise deviations are desirable in many applications,
which motivates us to study the precise deviations for linear Hawkes processes. 
In this paper, we will derive the precise deviations for linear Hawkes processes, using
the recent mod-$\phi$ convergence theory developed in \cite{modphi}. 
To apply the mod-$\phi$ convergence theory, two key steps are to show
the convergence of the moment generating function at a certain speed,
and verify the limit corresponds to an infinitely divisible distribution. 
The moment generating function for linear Hawkes processes has semi-explicit formula, 
and then the precise deviations results follow from the mod-$\phi$ convergence theory
after careful analysis and a series of propositions and lemmas. 
The paper is organized as follows. 
We will state the main results of our paper 
in Section \ref{MainSection}.
In particular, we will give precise large deviations results
in Section \ref{LDPSection}
and precise moderate deviations and fluctuation results
in Section \ref{MDPSection}.
We provide numerical illustrations in Section \ref{sec:numerics}.
All the proofs will be given in Section \ref{ProofsSection}.

%%%%%%%%%%%%%%%%%%%%%%%%%%%%%%%%%%%%%%%%%%%%%%%%%%%%%%%%%%%%%%%%%%%%%%%%%%%%%%%%%%%%%%%%
\section{Main Results}\label{MainSection}

In this paper, we apply the recently developed mod-$\phi$ convergence
method \cite{modphi} to obtain precise large deviations for linear Hawkes processes 
for the large time asymptotic regime. We will also obtain the precise moderate deviations
and some fluctuations results.

Let us first recall the definition of mod-$\phi$ convergence, see e.g. Definition 1.1.1. in \cite{modphi}.
Let $(X_{n})_{n\in\mathbb{N}}$ be a sequence of real-valued random variables
and $\mathbb{E}[e^{zX_{n}}]$ exist in a strip $\mathcal{S}_{(c,d)}:=\{z\in\mathbb{C}: c<\mathcal{R}(z)<d\}$,
with $c<0<d$ extended real numbers, i.e. we allow $c=-\infty$ and $d=+\infty$
and $\mathcal{R}(z)$ denotes the real part of $z\in\mathbb{C}$ throughout this paper.
We assume that there exists a non-constant infinitely divisible distribution $\phi$
with $\int_{\mathbb{R}}e^{zx}\phi(dx)=e^{\eta(z)}$, which is well defined
on $\mathcal{S}_{(c,d)}$, and an analytic function $\psi(z)$
that does not vanish on the real part of $\mathcal{S}_{(c,d)}$
such that locally uniformly in $z\in\mathcal{S}_{(c,d)}$,
\begin{equation}\label{eqn:mod:phi}
e^{-t_{n}\eta(z)}\mathbb{E}[e^{zX_{n}}]\rightarrow\psi(z),
\end{equation}
where $t_{n}\rightarrow+\infty$ as $n\rightarrow\infty$. 
Then we say that $X_{n}$ converges mod-$\phi$ on $\mathcal{S}_{(c,d)}$
with parameters $(t_{n})_{n\in\mathbb{N}}$ and limiting function $\psi$.

Assume that $\phi$ is a lattice distribution, 
i.e., a distribution with support included in $\gamma+\lambda\mathbb{Z}$ for some 
constants $\gamma,\lambda>0$. 
Also assume that the sequence of random variables $(X_{n})_{n\in\mathbb{N}}$
converges mod-$\phi$ at speed $O((t_{n})^{-v})$ (Definition 2.1.1. in \cite{modphi}), that is,
\begin{equation}
\sup_{z\in K}\left|e^{-t_{n}\eta(z)}\mathbb{E}[e^{zX_{n}}]-\psi(z)\right|\leq C_{K}(t_{n})^{-v},
\end{equation}
where $C_{K}>0$ is some constant, for any compact set $K\subset\mathcal{S}_{(c,d)}$.
Then Theorem 3.2.2. in \cite{modphi}
states that for any $x\in\mathbb{R}$ in the interval $(\eta'(c),\eta'(d))$
such that $t_{n}x\in\mathbb{N}$, we have
\begin{equation}\label{precise:1}
\mathbb{P}(X_{n}=t_{n}x)
=\frac{e^{-t_{n}F(x)}}{\sqrt{2\pi t_{n}\eta''(\theta^{\ast})}}
\left(\psi(\theta^{\ast})+\frac{a_{1}}{t_{n}}+\frac{a_{2}}{t_{n}^{2}}
+\cdots+\frac{a_{v-1}}{t_{n}^{v-1}}
+O\left(\frac{1}{t_{n}^{v}}\right)\right),
\end{equation}
as $n\rightarrow\infty$, where 
$\theta^{\ast}$ is defined via $\eta'(\theta^{\ast})=x$,
and $F(x):=\sup_{\theta\in\mathbb{R}}\{\theta x-\eta(\theta)\}$
is the Legendre transform of $\eta(\cdot)$, 
and if $x\in\mathbb{R}$ and $x\in(\eta'(0),\eta'(d))$, 
then, as $n\rightarrow\infty$,
\begin{equation}\label{precise:2}
\mathbb{P}(X_{n}\geq t_{n}x)
=\frac{e^{-t_{n}F(x)}}{\sqrt{2\pi t_{n}\eta''(\theta^{\ast})}}\frac{1}{1-e^{-\theta^{\ast}}}
\left(\psi(\theta^{\ast})+\frac{b_{1}}{t_{n}}+\frac{b_{2}}{t_{n}^{2}}
+\cdots+\frac{b_{v-1}}{t_{n}^{v-1}}
+O\left(\frac{1}{t_{n}^{v}}\right)\right),
\end{equation}
where $(a_{k})_{k=1}^{\infty}$, $(b_{k})_{k=1}^{\infty}$
are rational fractions in the derivatives of $\eta$ and $\psi$ at $\theta^{\ast}$.

The mod-$\phi$ convergence developed in \cite{modphi}
also implies the precise moderate deviations and extended central limit theorem,
see Theorem 3.3.1. and Corollary 3.3.5. in \cite{modphi}.
More precisely, the second part of Corollary 3.3.5. in \cite{modphi}
states that assuming the mod-$\phi$ convergence holds, then
for any $y=o((t_{n})^{1/2-1/m})$, where $m\geq 3$, $m\in\mathbb{N}$,
one has
\begin{equation}\label{precise:3}
\mathbb{P}\left(X_{n}\geq t_{n}\eta'(0)+\sqrt{t_{n}\eta''(0)}y\right)
=\frac{(1+o(1))}{y\sqrt{2\pi}}
\cdot e^{-\sum_{i=2}^{m-1}\frac{F^{(i)}(\eta'(0))}{i!}
\frac{(\eta''(0))^{i/2}y^{i}}{(t_{n})^{(i-2)/2}}}.
\end{equation}
The precise moderate deviations and extended central limit theorem
then follow from \eqref{precise:3}. See \cite{modphi} for further discussions.
%%%%%%%%%%%%%%%%%%%%%%%%%%%%%%%%%%%%%
Note that the precise deviations in \eqref{precise:1} and \eqref{precise:2}
require mod-$\phi$ convergence at speed $O((t_{n})^{-v})$, 
while the fluctuations result in \eqref{precise:3} only requires mod-$\phi$ convergence.

Finally, \cite{modphi} suggested in their Remark 3.2.5. how $(a_{k})_{k=1}^{\infty}$, $(b_{k})_{k=1}^{\infty}$
in \eqref{precise:1} and \eqref{precise:2}
can be computed without providing detailed formulas.
For the sake of applications and implementations, we will spell out 
the explicit formulas for $(a_{k})_{k=1}^{\infty}$ and $(b_{k})_{k=1}^{\infty}$
in the next proposition and the proof will be provided in the Appendix.

\begin{proposition}\label{prop:ab}
For any function $f$, denote its $k$-th order derivative by $f^{(k)}$.
Let $\mathcal{S}_{n}$ be the set consisting of 
all the $n$-tuples of non-negative integers $(m_{1},\ldots,m_{n})$
satisfying:
$1\cdot m_{1}+2\cdot m_{2}+3\cdot m_{3}+\cdots+n\cdot m_{n}=n$.

(i) For every $k\geq 1$,
\begin{align}
a_{k}&=\sum_{\ell=0}^{2k}\frac{\psi^{(2k-\ell)}(\theta^{\ast})}{(2k-\ell)!}
\sum_{\mathcal{S}_{\ell}}\frac{(-1)^{m_{1}+\cdots+m_{\ell}}}{m_{1}!1!^{m_{1}}m_{2}!2!^{m_{2}}\cdots m_{\ell}!\ell!^{m_{\ell}}}
\nonumber
\\
&\qquad\qquad
\cdot\prod_{j=1}^{\ell}\left(\frac{1}{\eta''(\theta^{\ast})}\frac{\eta^{(j+2)}(\theta^{\ast})}{(j+2)(j+1)}\right)^{m_{j}}
\frac{(-1)^{k}(2(k+m_{1}+\cdots+m_{\ell})-1)!!}{(\eta''(\theta^{\ast}))^{k}}.
\label{a:eqn}
\end{align}

(ii) For every $k\geq 1$,
\begin{align}
b_{k}&=
\sum_{n=0}^{2k}
\sum_{\mathcal{S}_{n}}\frac{e^{-(m_{1}+\cdots+m_{n})\theta^{\ast}}(m_{1}+\cdots+m_{n})!(1-e^{-\theta^{\ast}})^{-(m_{1}+\cdots+m_{n})-1}}
{m_{1}!1!^{m_{1}}m_{2}!2!^{m_{2}}\cdots m_{n}!n!^{m_{n}}}
\cdot
\prod_{j=1}^{n}(-1)^{j\cdot m_{j}}
\nonumber
\\
&\qquad
\cdot
\sum_{\ell=0}^{2k-n}\frac{\psi^{(2k-n-\ell)}(\theta^{\ast})}{(2k-n-\ell)!}
\sum_{\mathcal{S}_{\ell}}\frac{(-1)^{m_{1}+\cdots+m_{\ell}}}{m_{1}!1!^{m_{1}}m_{2}!2!^{m_{2}}\cdots m_{\ell}!\ell!^{m_{\ell}}}
\nonumber
\\
&\qquad\qquad
\cdot\prod_{j=1}^{\ell}\left(\frac{1}{\eta''(\theta^{\ast})}\frac{\eta^{(j+2)}(\theta^{\ast})}{(j+2)(j+1)}\right)^{m_{j}}
\frac{(-1)^{k}(2(k+m_{1}+\cdots+m_{\ell})-1)!!}{(\eta''(\theta^{\ast}))^{k}}.
\label{b:eqn}
\end{align}
\end{proposition}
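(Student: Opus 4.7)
The plan is to execute the saddle-point expansion that underlies Theorem 3.4 of \cite{modphi}, making the coefficients explicit through the Bell polynomial (multinomial / Faà di Bruno) identity. For part (i), I would start from the Fourier inversion formula for a lattice-valued random variable,
\begin{equation*}
\mathbb{P}(X_n = t_n x) = \frac{1}{2\pi}\int_{-\pi}^{\pi} e^{-iu t_n x}\,\mathbb{E}[e^{iu X_n}]\,du,
\end{equation*}
shift the contour so that it passes through the saddle point $\theta^*$ defined by $\eta'(\theta^*)=x$, and replace $\mathbb{E}[e^{zX_n}]$ by $e^{t_n\eta(z)}\psi(z)$ up to the asserted $O(t_n^{-v})$ remainder. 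After the rescaling $z=\theta^*+iv/\sqrt{t_n}$ the first-order term cancels by the saddle-point condition, and the integrand becomes a centered Gaussian $e^{-v^2\eta''(\theta^*)/2}$ times
\begin{equation*}
\psi\!\left(\theta^*+\tfrac{iv}{\sqrt{t_n}}\right)\exp\!\left(\sum_{j\geq 1}\frac{(iv)^{j+2}}{(j+2)!}\,\frac{\eta^{(j+2)}(\theta^*)}{t_n^{j/2}}\right).
\end{equation*}

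Next, I would Taylor-expand $\psi$ and expand the exponential by the multinomial identity $\exp(\sum_{j\geq 1} c_j z^j)=\sum_{\ell\geq 0} z^\ell \sum_{\mathcal{S}_\ell}\prod_j c_j^{m_j}/m_j!$, which is exactly what injects the index set $\mathcal{S}_\ell$ appearing in \eqref{a:eqn}. To extract the coefficient of $t_n^{-k}$: the $\psi$-factor contributes $\psi^{(2k-\ell)}(\theta^*)(iv)^{2k-\ell}/(2k-\ell)!$ at order $t_n^{-(2k-\ell)/2}$, while the exponential contributes, for each tuple in $\mathcal{S}_\ell$, a factor of total $v$-degree $\ell+2M$ at order $t_n^{-\ell/2}$, where $M=m_1+\cdots+m_\ell$. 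The product has combined $v$-degree $2k+2M$, always even, so the Gaussian moment formula $\int v^{2m} e^{-v^2\eta''/2}\,dv=\sqrt{2\pi/\eta''}\cdot(2m-1)!!/(\eta'')^m$ produces the factor $(-1)^{k+M}(2(k+M)-1)!!/(\eta'')^{k+M}$, with $(-1)^{k+M}$ split as $(-1)^k(-1)^{m_1+\cdots+m_\ell}$. Using $(j+2)!^{m_j}=(j!)^{m_j}\cdot((j+2)(j+1))^{m_j}$ to rewrite the denominator then yields the precise form stated in \eqref{a:eqn}.

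For part (ii), I would repeat the recipe but starting from the upper-tail contour representation
\begin{equation*}
\mathbb{P}(X_n\geq t_n x)=\frac{1}{2\pi i}\int_{\theta^*-i\pi}^{\theta^*+i\pi}\frac{e^{-t_n x z}}{1-e^{-z}}\,\mathbb{E}[e^{zX_n}]\,dz,
\end{equation*}
in which the extra factor $(1-e^{-z})^{-1}$ must also be expanded. Setting $q=e^{-\theta^*}$ and combining a geometric series with a multinomial expansion,
\begin{equation*}
\frac{1}{1-q\,e^{-iv/\sqrt{t_n}}}=\frac{1}{1-q}\sum_{M\geq 0}\left(\frac{q}{1-q}\right)^{\!M}\left(\sum_{j\geq 1}\frac{(-iv)^j}{j!\,t_n^{j/2}}\right)^{\!M},
\end{equation*}
and then reindexing by the total degree $n=\sum jm_j$, produces exactly the outer sum of \eqref{b:eqn} with the prefactor $e^{-M\theta^*}(1-e^{-\theta^*})^{-M-1}\prod_j(-1)^{jm_j}$; the inner double sum is structurally the $a_k$-calculation at the reduced order $2k-n$. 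The main obstacle is the multi-index bookkeeping of three independent partition expansions (from $\psi$, from the $\eta$-correction, and, in (ii), from the geometric factor), and checking that the combined $v$-degree is always even so the Gaussian moment formula yields $(2(k+M)-1)!!$ cleanly. The tail cutoff away from the saddle point and the termwise integrability are already handled in the proof of Theorem 3.4 of \cite{modphi}; what remains here is purely the explicit extraction of coefficients.
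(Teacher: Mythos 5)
Your proposal is correct and follows essentially the same route as the paper: both defer the saddle-point analysis and error control to the proof of Theorem 3.4 (and Remark 3.7) of \cite{modphi}, and then extract the coefficients of $t_n^{-k}$ by expanding $\psi$, the exponential of the higher-order $\eta$-terms, and (for $b_k$) the factor $(1-e^{-z})^{-1}$, finishing with the Gaussian moment formula $(2m-1)!!$. Your multinomial identity for $\exp(\sum_j c_j z^j)$ and your geometric-series-plus-multinomial expansion of $(1-e^{-\theta^\ast}e^{-u})^{-1}$ are exactly the paper's Fa\`a di Bruno computations with $f(x)=e^{-x}$ and with $f(x)=(1-e^{-\theta^\ast}x)^{-1}$, $g(x)=e^{-x}$, so the two derivations of \eqref{a:eqn} and \eqref{b:eqn} coincide up to the harmless change of normalization in the rescaled integration variable.
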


%%%%%%%%%%%%%%%%%%%%%%%%%%%%%%%%%%%
\subsection{Precise Large Deviations}\label{LDPSection}

Our main results for the precise large deviations for the Hawkes process
is stated as follows. It provides the full expansion to arbitrary order 
in the large time asymptotic regime, which generalizes the large deviations
result in \cite{Bordenave}.

\begin{theorem}\label{LDPThm}
Given $v\in\mathbb{N}$.
Assume \eqref{assump:throughout} and the following condition hold:
\begin{equation}
\int_{0}^{\infty}s^{v+1}h(s)ds<\infty.
\end{equation}

(i) For any $x>0$, and $tx\in\mathbb{N}$, as $t\rightarrow\infty$,
\begin{equation}
\mathbb{P}(N_{t}=tx)
=e^{-tI(x)}\sqrt{\frac{I''(x)}{2\pi t}}
\left(\psi(\theta^{\ast})+\frac{a_{1}}{t}+\frac{a_{2}}{t^{2}}
+\cdots+\frac{a_{v-1}}{t^{v-1}}
+O\left(\frac{1}{t^{v}}\right)\right),
\end{equation}
where for any $\mathcal{R}(z)\leq\Vert h\Vert_{L^{1}}-1-\log\Vert h\Vert_{L^{1}}$,
\begin{equation}\label{defn:psi:varphi}
\psi(z):=e^{\nu \varphi(z)}, \quad \text{and}\quad  \varphi(z):=\int_{0}^{\infty}[F(s;z)-x(z)]ds,
\end{equation}
which is analytic in $\mathcal{S}_{(-\infty,\Vert h\Vert_{L^{1}}-1-\log\Vert h\Vert_{L^{1}})}$, 
and $F$ is the unique solution that satisfies
\begin{equation}\label{FEqn}
F(t;z)=e^{z+\int_{0}^{t}(F(t-s;z)-1)h(s)ds},
\end{equation}
with the constraint $|F(t;z)|\leq\frac{1}{\Vert h\Vert_{L^{1}}}$, and it is analytic in 
$\mathcal{S}_{(-\infty,\Vert h\Vert_{L^{1}}-1-\log\Vert h\Vert_{L^{1}})}$,
and $x(z):=F(\infty;z)$ exists and it satisfies the equation
\begin{equation}\label{xEqn}
x(z)=e^{z+\Vert h\Vert_{L^{1}}(x(z)-1)},
\end{equation}
and it is analytic in $\mathcal{S}_{(-\infty,\Vert h\Vert_{L^{1}}-1-\log\Vert h\Vert_{L^{1}})}$.
And $I(x)$ is defined in \eqref{Ifunction}, $I''(x)=\frac{\nu^{2}}{x(\nu+\Vert h\Vert_{L^{1}}x)^{2}}$, 
and 
\begin{equation}
\theta^{\ast}=\log\left(\frac{x}{\nu+\Vert h\Vert_{L^{1}}x}\right)
-\frac{\Vert h\Vert_{L^{1}}x}{\nu+\Vert h\Vert_{L^{1}}x}+\Vert h\Vert_{L^{1}},
\end{equation}
where $(a_{k})_{k=1}^{\infty}$
are rational fractions in the derivatives of $\eta$ and $\psi$ at $\theta^{\ast}$
whose formulas are given in \eqref{a:eqn}, 
where $\eta(z):=\nu(x(z)-1)$.

(ii) For any $x>\frac{\nu}{1-\Vert h\Vert_{L^{1}}}$ and $tx\in\mathbb{N}$, as $t\rightarrow\infty$,
\begin{equation}
\mathbb{P}(N_{t}\geq tx)
=e^{-tI(x)}\sqrt{\frac{I''(x)}{2\pi t}}\frac{1}{1-e^{-\theta^{\ast}}}
\left(\psi(\theta^{\ast})+\frac{b_{1}}{t}+\frac{b_{2}}{t^{2}}
+\cdots+\frac{b_{v-1}}{t^{v-1}}
+O\left(\frac{1}{t^{v}}\right)\right),
\end{equation}
where $(b_{k})_{k=1}^{\infty}$
are rational fractions in the derivatives of $\eta$ and $\psi$ at $\theta^{\ast}$
whose formulas are given in \eqref{b:eqn}.
\end{theorem}

Note that the mod-$\phi$ convergence theory developed in \cite{modphi}
works for the a sequence of random variables $X_{n}$ associated
with discrete times $t_{n}$, see \eqref{eqn:mod:phi}.
In Theorem \ref{LDPThm}, although $N_{t}$ is a continuous-time process,
the restriction $tx\in\mathbb{N}$ guarantees that $t$ takes
values along $t_{n}:=\frac{n}{x}$ so that $t$ takes discrete values
and we can define $X_{n}:=N_{n/x}$ to apply the mod-$\phi$ convergence theory developed 
for discrete times in \cite{modphi}.

In order to apply and implement Theorem \ref{LDPThm}, 
we need to compute the sequences $(a_{k})_{k=1}^{\infty}$ and
$(b_{k})_{k=1}^{\infty}$ whose formulas are provided in Proposition \ref{prop:ab}
which rely on the derivatives of $\eta$ and $\psi$ at $\theta^{\ast}$.
Next, we provide recursive formulas to compute the derivatives of
$\eta$ and $\psi$ at $\theta^{\ast}$ of any order.

\begin{proposition}\label{prop:etapsi}
(i) $\eta(\theta^{\ast})=\nu(x(\theta^{\ast})-1)$, and
for $k\geq 1$, $\eta^{(k)}(\theta^{\ast})=\nu x^{(k)}(\theta^{\ast})$.
For $k\geq 1$, $x^{(k)}(\theta^{\ast})$ can be computed recursively as:
\begin{align}
x^{(k)}(\theta^{\ast})
&=\frac{x(\theta^{\ast})}{1-\Vert h\Vert_{L^{1}}x(\theta^{\ast})}
\sum_{\mathcal{T}_{k}}\frac{k!\cdot\Vert h\Vert_{L^{1}}^{m_{1}+\cdots+m_{k-1}}}{m_{1}!1!^{m_{1}}m_{2}!2!^{m_{2}}\cdots m_{k-1}!(k-1)!^{m_{k-1}}}
\cdot
\prod_{j=1}^{k-1}(x^{(j)}(\theta^{\ast}))^{m_{j}}
\nonumber
\\
&\qquad
+\frac{x(\theta^{\ast})}{1-\Vert h\Vert_{L^{1}}x(\theta^{\ast})}\sum_{\ell=0}^{k-1}\binom{k}{\ell}
\sum_{\mathcal{S}_{\ell}}\frac{\ell!\cdot\Vert h\Vert_{L^{1}}^{m_{1}+\cdots+m_{\ell}}}{m_{1}!1!^{m_{1}}m_{2}!2!^{m_{2}}\cdots m_{\ell}!\ell!^{m_{\ell}}}
\cdot
\prod_{j=1}^{\ell}(x^{(j)}(\theta^{\ast}))^{m_{j}},
\nonumber
\end{align}
where $\mathcal{T}_{k}$ denotes the set of $(k-1)$-tuples
of non-negative integers $(m_{1},\ldots,m_{k-1})$ satisfying the constraint
$1\cdot m_{1}+2\cdot m_{2}+3\cdot m_{3}+\cdots+(k-1)\cdot m_{k-1}=k$.

(ii) For every $k\geq 1$,
\begin{equation}
\psi^{(k)}(\theta^{\ast})
=\sum_{\mathcal{S}_{k}}\frac{k!\cdot\nu^{m_{1}+\cdots+m_{k}}\cdot\psi(\theta^{\ast})}
{m_{1}!1!^{m_{1}}m_{2}!2!^{m_{2}}\cdots m_{k}!k!^{m_{k}}}
\cdot
\prod_{j=1}^{k}\left(\int_{0}^{\infty}\left[F^{(j)}(s;\theta^{\ast})-x^{(j)}(\theta^{\ast})\right]ds\right)^{m_{j}},
\end{equation}
where $F^{(k)}(\cdot;\theta^{\ast})$, $k\geq 1$, can be computed recursively as
\begin{align}\label{F:recursive}
&F^{(k)}(t;\theta^{\ast})
\\
&=F(t;\theta^{\ast})\cdot
\int_{0}^{t}F^{(k)}(t-s;\theta^{\ast})h(s)ds
\nonumber
\\
&
\quad
+F(t;\theta^{\ast})\cdot
\sum_{\mathcal{T}_{k}}\frac{k!}
{m_{1}!1!^{m_{1}}\cdots m_{k-1}!(k-1)!^{m_{k-1}}}
\prod_{j=1}^{k-1}\left(\int_{0}^{t}F^{(j)}(t-s;\theta^{\ast})h(s)ds\right)^{m_{j}}
\nonumber
\\
&
\quad
+F(t;\theta^{\ast})\cdot\sum_{\ell=0}^{k-1}\binom{k}{\ell}
\sum_{\mathcal{S}_{\ell}}\frac{\ell!}
{m_{1}!1!^{m_{1}}\cdots m_{\ell}!\ell!^{m_{\ell}}}
\prod_{j=1}^{\ell}\left(\int_{0}^{t}F^{(j)}(t-s;\theta^{\ast})h(s)ds\right)^{m_{j}}.
\nonumber
\end{align}
\end{proposition}

The main strategy of the proof of Theorem \ref{LDPThm}
is to show the mod-$\phi$ convergence and apply Theorem 3.2.2. in \cite{modphi}.
The proof of Theorem \ref{LDPThm} relies on a series
of lemmas and propositions that we will state later.
We will first recall and discuss some well-known properties for the linear Hawkes
process that will be used extensively in our proofs later.

Hawkes and Oakes \cite{HawkesII} first discovered that a linear Hawkes process has an immigration-birth representation.
The immigrants (roots) arrive according to a standard Poisson process $\bar{N}$ with intensity $\nu>0$.
Each immigrant generates children according to a Galton-Watson tree.
Indeed, for an immigrant that arrives at time $s$, children are generated
according to a Poisson process with intensity $h(t-s)$ at time $t\geq s$, 
and each child will generate children independently following the same mechanism.
It turns out that the total number of children generated by the immigrant
follows a Poisson distribution with parameter $\Vert h\Vert_{L^{1}}$
and conditional on the total number of children generated by the immigrant, say $K$,
then the birth times of the children subtracting the arrival time
of the immigrant are distributed as the order statistics of $k$ i.i.d. 
random variables with density $h(\cdot)/\Vert h\Vert_{L^{1}}$.
Finally, the Hawkes process $N_{t}$ is the number of all the immigrants and their descendants
that arrive on the time interval $[0,t]$.

By using the immigration-birth representation for linear Hawkes processes, 
it is known that (see e.g. \cite{ZhuMDP,KarabashZhu,GZ4})
\begin{equation}\label{NEqn}
\mathbb{E}[e^{zN_{t}}]=e^{\nu\int_{0}^{t}(F(s;z)-1)ds},
\end{equation}
where $F$ satisfies the equation:
\begin{equation}
F(t;z)=e^{z+\int_{0}^{t}(F(t-s;z)-1)h(s)ds},\qquad t\geq 0,
\end{equation}
which holds for any $z\in\mathbb{C}$, such that 
\begin{equation}\label{thetacEqn}
\mathcal{R}(z)\leq\theta_{c}:=\Vert h\Vert_{L^{1}}-1-\log\Vert h\Vert_{L^{1}}.
\end{equation}

Note that by the immigration-birth representation, we can interpret $F(t;z)$ as
$F(t;z)=\mathbb{E}[e^{zS_{t}}]$,
where $S_{t}$ is the number of all the descendants of an immigrant
that arrives at time $0$, on the time interval $[0,t]$ including the immigrant.
Moreover, let us define
$x(z)=\mathbb{E}[e^{zS_{\infty}}]$.
It is well known that $x(z)$ satisfies the algebraic equation, see e.g. \cite{KarabashZhu}:
\begin{equation}
x(z)=e^{z+\Vert h\Vert_{L^{1}}(x(z)-1)}.
\end{equation}
This algebraic equation may have more than one solution.
It is known that for $z\in\mathbb{R}$, 
there are at most two solutions of this algebraic equation 
and $\mathbb{E}[e^{zS_{\infty}}]$ is the smaller solution, see e.g. \cite{KarabashZhu}.
By dominated convergence theorem, for $\mathcal{R}(z)\leq\theta_{c}$, where $\theta_{c}$ is defined in \eqref{thetacEqn},
we have $F(t,z)\rightarrow x(z)$,
as $t\rightarrow\infty$.
The limit $x(\cdot)$ has the following properties (Part (ii) of the following Proposition \ref{xprop} follows
from Theorem 3.2.1. in \cite{Bordenave}):

\begin{proposition}\label{xprop}
For any $\theta\in\mathbb{R}$, and $\theta\leq\theta_{c}$, where $\theta_{c}$ is defined in \eqref{thetacEqn},
we have

(i) $x(\theta)\Vert h\Vert_{L^{1}}\leq 1$.
(ii) $x'(\theta)\rightarrow\infty$ as $\theta\uparrow\theta_{c}$.
\end{proposition}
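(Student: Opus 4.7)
The plan is to reduce the algebraic equation \eqref{xEqn} to a one-variable scalar problem via a substitution, identify the critical value $\theta_{c}$ explicitly as the location where the two real branches merge, and then differentiate implicitly to handle part (ii).

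First I would set $y=\Vert h\Vert_{L^{1}}x(\theta)$ and multiply \eqref{xEqn} by $\Vert h\Vert_{L^{1}}$ to obtain
\begin{equation*}
y\,e^{-y}=\Vert h\Vert_{L^{1}}\,e^{\theta-\Vert h\Vert_{L^{1}}}.
\end{equation*}
The right-hand side is strictly increasing in $\theta$, and at $\theta=\theta_{c}=\Vert h\Vert_{L^{1}}-1-\log\Vert h\Vert_{L^{1}}$ it equals exactly $e^{-1}$. The function $g(y)=ye^{-y}$ on $[0,\infty)$ is strictly increasing on $[0,1]$, strictly decreasing on $[1,\infty)$, and attains its unique maximum $e^{-1}$ at $y=1$. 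Hence for each $\theta<\theta_{c}$ the equation has exactly two positive real solutions, one in $(0,1)$ and one in $(1,\infty)$, which merge to $y=1$ at $\theta=\theta_{c}$. Since, as recalled just before the proposition, $x(\theta)=\mathbb{E}[e^{\theta S_{\infty}}]$ is the smaller of the two real solutions, I obtain $\Vert h\Vert_{L^{1}}x(\theta)<1$ for $\theta<\theta_{c}$ and $\Vert h\Vert_{L^{1}}x(\theta_{c})=1$, proving (i).

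For (ii), I would differentiate \eqref{xEqn} implicitly in $\theta$. Using $x(\theta)=e^{\theta+\Vert h\Vert_{L^{1}}(x(\theta)-1)}$ gives
\begin{equation*}
x'(\theta)=x(\theta)\bigl(1+\Vert h\Vert_{L^{1}}x'(\theta)\bigr),
\qquad\text{hence}\qquad
x'(\theta)=\frac{x(\theta)}{1-\Vert h\Vert_{L^{1}}x(\theta)}.
\end{equation*}
By part (i), $\Vert h\Vert_{L^{1}}x(\theta)\uparrow 1$ as $\theta\uparrow\theta_{c}$ while $x(\theta)\to 1/\Vert h\Vert_{L^{1}}>0$, so $x'(\theta)\to\infty$.

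The main obstacle is really bookkeeping rather than technical difficulty: one has to justify that the smaller real solution of \eqref{xEqn}, defined a priori for $\theta<\theta_{c}$ through the probabilistic representation, extends continuously up to $\theta_{c}$ with the claimed limit $1/\Vert h\Vert_{L^{1}}$, and that implicit differentiation is legitimate, i.e.\ $1-\Vert h\Vert_{L^{1}}x(\theta)>0$ throughout $(-\infty,\theta_{c})$. Both follow from the strict monotonicity of $g$ on $[0,1]$: the inverse branch $g^{-1}\colon[0,e^{-1}]\to[0,1]$ is continuous (in fact real-analytic on $[0,e^{-1})$), so $x(\theta)=g^{-1}(\Vert h\Vert_{L^{1}}e^{\theta-\Vert h\Vert_{L^{1}}})/\Vert h\Vert_{L^{1}}$ inherits these properties on $(-\infty,\theta_{c}]$, and is strictly less than $1/\Vert h\Vert_{L^{1}}$ for $\theta<\theta_{c}$.
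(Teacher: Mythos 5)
Your proof is correct, and part (ii) is exactly the paper's argument: implicit differentiation of \eqref{xEqn} giving $x'(\theta)=x(\theta)/(1-\Vert h\Vert_{L^{1}}x(\theta))$, which blows up as $\theta\uparrow\theta_{c}$. Part (i) takes a slightly different route: the paper argues via monotonicity of $\theta\mapsto x(\theta)$ (from $x(\theta)=\mathbb{E}[e^{\theta S_{\infty}}]$) together with the observation that at $\theta=\theta_{c}$ the fixed-point equation forces $\Vert h\Vert_{L^{1}}x(\theta_{c})=1$ (since $y=e^{y-1}$ only at $y=1$), whereas you analyze, for each fixed $\theta\leq\theta_{c}$, the branch structure of $g(y)=ye^{-y}$ and use the fact that $x(\theta)$ is the smaller real root, hence lies on the branch $y\in(0,1]$. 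The two arguments rest on the same scalar equation and are comparable in length; yours has the advantage of making explicit two points the paper leaves implicit, namely why the smaller root stays strictly below $1/\Vert h\Vert_{L^{1}}$ for $\theta<\theta_{c}$ and why $x(\theta)\to 1/\Vert h\Vert_{L^{1}}$ as $\theta\uparrow\theta_{c}$ (continuity of the inverse branch $g^{-1}$ on $[0,e^{-1}]$), the latter being what legitimizes the limit in (ii); the paper's version, in exchange, gets (i) for all $\theta\leq\theta_{c}$ in one line from monotonicity once the endpoint identity is known.
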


We know that $\mathbb{E}[e^{zS_{t}}]$
satisfies the equation \eqref{FEqn}.
Thus, as a by-product of the immigration-birth representation
for linear Hawkes processes, we get
the existence of the solution of \eqref{FEqn}.
Let us notice that for any $\mathcal{R}(z)\leq\theta_{c}$,
\begin{equation}
|F(t;z)|=\left|\mathbb{E}\left[e^{zS_{t}}\right]\right|
\leq\mathbb{E}\left[\left|e^{zS_{t}}\right|\right]
=\mathbb{E}\left[e^{\mathcal{R}(z)S_{t}}\right]
\leq\frac{1}{\Vert h\Vert_{L^{1}}}.
\end{equation}
Therefore, it suffices to consider the solution of the equation \eqref{FEqn}
that satisfies the constraint $|F(t;z)|\leq\frac{1}{\Vert h\Vert_{L^{1}}}$.
With this additional constraint, the equation \eqref{FEqn} has a unique solution:

\begin{proposition}\label{uniqueProp}
Let $z\in\mathbb{C}$ and $\mathcal{R}(z)\leq\theta_{c}$. 
The equation \eqref{FEqn} with the constraint $|F(t;z)|\leq\frac{1}{\Vert h\Vert_{L^{1}}}$
has a unique solution.
\end{proposition}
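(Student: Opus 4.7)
Suppose $F_{1}$ and $F_{2}$ are two solutions of \eqref{FEqn} both satisfying the constraint $|F_{i}(t;z)|\le 1/\Vert h\Vert_{L^{1}}$, and set $G(t):=|F_{1}(t;z)-F_{2}(t;z)|$; in particular $\Vert G\Vert_{\infty}\le 2/\Vert h\Vert_{L^{1}}$. The plan is to derive a Volterra-type integral inequality of convolution form for $G$, and then to iterate it so that the iterated kernel becomes the distribution of a sum of i.i.d.\ positive random variables that escapes to infinity, forcing $G\equiv 0$.

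To derive the key inequality, I would combine the elementary complex-analytic bound
\[
|e^{a}-e^{b}|\le |a-b|\exp\bigl(\max(\mathcal{R}(a),\mathcal{R}(b))\bigr), \qquad a,b\in\mathbb{C},
\]
(obtained by writing $e^{a}-e^{b}=(a-b)\int_{0}^{1}e^{b+s(a-b)}ds$) with the observation that for each fixed point $F_{i}$ of \eqref{FEqn}, $\exp(\mathcal{R}(a_{i}))=|e^{a_{i}}|=|F_{i}(t;z)|\le 1/\Vert h\Vert_{L^{1}}$, where $a_{i}:=z+\int_{0}^{t}(F_{i}(t-s;z)-1)h(s)ds$. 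Since $|a_{1}-a_{2}|\le\int_{0}^{t}G(t-s)h(s)ds$, this yields
\[
G(t)\le \frac{1}{\Vert h\Vert_{L^{1}}}(h\ast G)(t).
\]

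Iterating this inequality $n$ times with the help of Fubini and associativity of convolution gives $G(t)\le \Vert h\Vert_{L^{1}}^{-n}(h^{\ast n}\ast G)(t)\le 2\Vert h\Vert_{L^{1}}^{-1}\,\mathbb{P}(S_{n}\le t)$, where $S_{n}=X_{1}+\cdots+X_{n}$ and the $X_{i}$ are i.i.d.\ with probability density $h(\cdot)/\Vert h\Vert_{L^{1}}$ on $(0,\infty)$. Because the $X_{i}$ are strictly positive almost surely with mean $\Vert h\Vert_{L^{1}}^{-1}\int_{0}^{\infty}sh(s)ds\in(0,\infty]$, the strong law of large numbers (or just monotonicity of partial sums of positive random variables) gives $S_{n}\to\infty$ almost surely, so $\mathbb{P}(S_{n}\le t)\to 0$ for each fixed $t\ge 0$. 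Letting $n\to\infty$ then forces $G(t)=0$, which is the desired uniqueness.

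The main obstacle is that, because $\Vert h\Vert_{L^{1}}<1$, the factor $1/\Vert h\Vert_{L^{1}}$ appearing in the single-step inequality is strictly larger than one, so neither a direct Gronwall argument nor a single Picard iteration provides a contraction. The resolution is the probabilistic reinterpretation of the iterated kernel $h^{\ast n}/\Vert h\Vert_{L^{1}}^{n}$ as an $n$-fold convolution of a genuine probability density, whose mass concentrates at large arguments as $n\to\infty$; it is precisely here that the a priori bound $|F|\le 1/\Vert h\Vert_{L^{1}}$ supplied by the immigration--birth representation is indispensable, since without it the exponential factor $\exp(\mathcal{R}(a_{i}))$ could not be controlled.
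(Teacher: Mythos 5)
Your argument is correct, but it takes a genuinely different route from the paper. The paper's proof uses the crude estimate $|e^{z_{1}}-e^{z_{2}}|\leq|z_{1}-z_{2}|e^{|z_{1}|+|z_{2}|}$, pulls out the local sup norm $\Vert h\Vert_{L^{\infty}[0,T]}$ (available since $h$ is locally bounded), and then applies the classical Gronwall inequality on each finite interval $[0,T]$, letting $T$ be arbitrary. You instead exploit the fixed-point structure to get the sharper bound $|e^{a_{1}}-e^{a_{2}}|\leq|a_{1}-a_{2}|\exp(\max(\mathcal{R}(a_{1}),\mathcal{R}(a_{2})))$ with $\exp(\mathcal{R}(a_{i}))=|F_{i}(t;z)|\leq 1/\Vert h\Vert_{L^{1}}$, which yields the convolution inequality $G\leq\Vert h\Vert_{L^{1}}^{-1}\,h\ast G$; iterating and reading $h^{\ast n}/\Vert h\Vert_{L^{1}}^{n}$ as the density of a sum $S_{n}$ of i.i.d.\ positive random variables gives $G(t)\leq\Vert G\Vert_{\infty}\,\mathbb{P}(S_{n}\leq t)\to 0$. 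This renewal-type iteration is in the spirit of the generalized Gronwall inequality the authors invoke later (via Chu--Metcalf) in the proof of Lemma \ref{MainLemma}, and it has the small advantage of not using local boundedness of $h$, only $h\in L^{1}$ together with the a priori bound $|F|\leq 1/\Vert h\Vert_{L^{1}}$; the paper's route is more elementary and avoids the probabilistic reinterpretation. One small caveat: your parenthetical remark that ``monotonicity of partial sums of positive random variables'' alone forces $S_{n}\to\infty$ is not quite right (a monotone sum of positive terms can converge); you need the i.i.d.\ structure, either through the SLLN with mean in $(0,\infty]$ as you primarily argue, or via Borel--Cantelli applied to the events $\{X_{i}>\epsilon\}$, so the conclusion stands.
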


The key to prove the main result Theorem \ref{LDPThm} is
to verify the mod-$\phi$ convergence. More precisely, we need to show that
\begin{equation}
e^{-t\eta(z)}\mathbb{E}[e^{zN_{t}}]
\rightarrow\psi(z):=e^{\nu \varphi(z)},
\end{equation}
as $t\rightarrow\infty$ locally uniformly in $z$
for $\mathcal{R}(z)<\Vert h\Vert_{L^{1}}-1-\log\Vert h\Vert_{L^{1}}$, 
where 
$\varphi(z)=\int_{0}^{\infty}[F(s;z)-x(z)]ds$,
is analytic in $z$ and 
$\eta(z)=\nu(x(z)-1)$,
and
$e^{\eta(z)}=e^{\nu(x(z)-1)}=\mathbb{E}[e^{zY}]$,
for some random variable $Y$, where $Y$ has an infinitely divisible distribution.
We will show the mod-$\phi$ convergence below via a series of lemmas.

First, we show that $Y$ is infinitely divisible. The infinite divisibility
is a limitation of the method of mod-$\phi$ convergence. Fortunately,
the limiting distribution in the case of the linear Hawkes process is indeed infinitely divisible.

\begin{lemma}\label{divLemma}
$Y$ has an infinitely divisible distribution.
\end{lemma}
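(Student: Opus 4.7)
The plan is to identify $Y$ as a compound Poisson random variable and invoke the classical fact that every compound Poisson distribution is infinitely divisible.

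First, I would recall the immigration--birth interpretation already set up in the excerpt: $x(z) = \mathbb{E}[e^{z S_\infty}]$, where $S_\infty$ is the total progeny (including the root) of the Galton--Watson tree generated by a single immigrant, whose offspring law is Poisson$(\|h\|_{L^1})$. Since $\|h\|_{L^1} < 1$, this tree is subcritical and $S_\infty$ is almost surely a finite $\mathbb{N}$-valued random variable, so $x$ is a genuine probability generating function on the negative real axis (and extends analytically to the strip $\mathcal{R}(z) < \theta_c$ by Proposition~\ref{xprop}).

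Next, let $K$ be a Poisson$(\nu)$ random variable independent of an i.i.d.\ sequence $(S_\infty^{(i)})_{i \geq 1}$ distributed as $S_\infty$, and define
\begin{equation*}
Y := \sum_{i=1}^{K} S_\infty^{(i)}.
\end{equation*}
Conditioning on $K$ and using independence, one computes
\begin{equation*}
\mathbb{E}[e^{zY}] = \sum_{k=0}^{\infty} e^{-\nu}\frac{\nu^k}{k!}\, x(z)^{k} = e^{\nu(x(z)-1)} = e^{\eta(z)},
\end{equation*}
so $Y$ indeed realizes the law whose existence is asserted by the lemma. (The identity extends from the real axis to the full strip $\mathcal{R}(z) < \theta_c$ by analyticity of both sides.) Heuristically, $Y$ is the limiting ``per unit time'' contribution to $N_t$ of one Poisson$(\nu)$-generation of immigrants, each of which produces a cluster of size $S_\infty$.

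Infinite divisibility is then immediate from the compound-Poisson factorization: for every integer $n \geq 1$,
\begin{equation*}
e^{\nu(x(z)-1)} = \Bigl(e^{(\nu/n)(x(z)-1)}\Bigr)^{n},
\end{equation*}
and each factor on the right is itself the moment generating function of a compound Poisson law (with intensity $\nu/n$ and jump distribution $S_\infty$), hence a bona fide probability distribution. Thus $Y$ is equal in law to a sum of $n$ i.i.d.\ copies of such a random variable, which is the definition of infinite divisibility. There is no serious obstacle here; the content is just the compound Poisson identity, together with a brief verification that $x(z)$ is a valid p.g.f., both of which fall directly out of the immigration--birth representation already in use.
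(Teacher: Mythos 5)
Your proposal is correct and follows essentially the same route as the paper: both identify $Y$ as a compound Poisson random variable $\sum_{i=1}^{K} S_\infty^{(i)}$ with $K\sim\mathrm{Poisson}(\nu)$ and i.i.d.\ Galton--Watson total progeny sizes, and then obtain infinite divisibility by splitting the Poisson intensity $\nu$ into $n$ pieces of size $\nu/n$. The only difference is cosmetic: you carry out the factorization at the level of moment generating functions, while the paper splits the Poisson count $K$ directly into $n$ independent $\mathrm{Poisson}(\nu/n)$ counts.
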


To show the mod-$\phi$ convergence, the main technical lemma
is given as follows:

\begin{lemma}\label{MainLemma}
For any $\mathcal{R}(z)<\theta_{c}$, where $\theta_{c}$ is defined in \eqref{thetacEqn}, 
\begin{equation}
\varphi(z)=\int_{0}^{\infty}[F(s;z)-x(z)]ds
\end{equation}
is well-defined and analytic, and as $t\rightarrow\infty$,
\begin{equation}
e^{-t(\nu(x(z)-1))}\mathbb{E}[e^{zN_{t}}]
\rightarrow e^{\nu \varphi(z)},
\end{equation}
locally uniformly in $z$. 
In addition, if $\int_{0}^{\infty}s^{v+1}h(s)ds<\infty$, then 
for any compact set $K$, there exists some $C_{K}>0$ such that
$\sup_{z\in K}|e^{-t(\nu(x(z)-1))}\mathbb{E}[e^{zN_{t}}]
-e^{\nu \varphi(z)}|\leq C_{K}t^{-v}$.
\end{lemma}

To this end, we have established the mod-$\phi$ convergence
for the linear Hawkes process for the large time limit. 
The proofs of all the propositions, lemmas and Theorem \ref{LDPSection}
will be given in Section \ref{ProofsSection}.

%%%%%%%%%%%%%%%%%%%%%%%%%%%%%%%%%%%%%%%%%%%%%%

%%%%%%%%%%%%%%%%%%%%%%%%%%%%%
\subsection{Precise Moderate Deviations and Fluctuations}\label{MDPSection}

The mod-$\phi$ convergence also implies the precise moderate deviations and extended central limit theorem,
see Theorem 3.3.1. and Corollary 3.3.5. in \cite{modphi}.
By using Corollary 3.3.5. in \cite{modphi}, which is re-stated in our \eqref{precise:3}, 
we have the following result:

\begin{theorem}\label{MDPThm}
Assume \eqref{assump:throughout} holds. 
If $y=o(t^{1/2-1/m})$, where $m\geq 3$, then as $t\rightarrow\infty$,
\begin{equation}
\mathbb{P}\left(N_{t}\geq\frac{\nu}{1-\Vert h\Vert_{L^{1}}}t
+\sqrt{t}\frac{\sqrt{\nu}}{(1-\Vert h\Vert_{L^{1}})^{3/2}}y\right)
=\frac{(1+o(1))}{y\sqrt{2\pi}}
e^{-\sum_{i=2}^{m-1}\frac{I^{(i)}(\eta'(0))}{i!}\frac{(\eta''(0))^{i/2}y^{i}}{t^{(i-2)/2}}},
\end{equation}
where $I(\cdot)$ is defined in \eqref{Ifunction} and for any $i\geq 2$,
\begin{equation}\label{Ii}
I^{(i)}(x)=(i-2)!(-1)^{i-2}x^{1-i}
\left((i-1)\left(\frac{\Vert h\Vert_{L^{1}}x}{\nu+\Vert h\Vert_{L^{1}}x}\right)^{i}
-i\left(\frac{\Vert h\Vert_{L^{1}}x}{\nu+\Vert h\Vert_{L^{1}}x}\right)^{i-1}+1\right),
\end{equation}
and $\eta'(0)=\frac{\nu}{1-\Vert h\Vert_{L^{1}}}$,
and $\eta''(0)=\frac{\nu}{(1-\Vert h\Vert_{L^{1}})^{3}}$.
\end{theorem}

\begin{remark}
By letting $m=3$ in Theorem \ref{MDPThm} and $I''(\eta'(0))=1/\eta''(0)$, we get an extended
central limit theorem, that is, for any $y=o(t^{1/6})$, as $t\rightarrow\infty$,
\begin{equation}\label{extended:CLT}
\mathbb{P}\left(N_{t}\geq\frac{\nu}{1-\Vert h\Vert_{L^{1}}}t
+\sqrt{t}\frac{\sqrt{\nu}}{(1-\Vert h\Vert_{L^{1}})^{3/2}}y\right)
=\overline{\Phi}(y)(1+o(1)),
\end{equation}
where  $\overline{\Phi}(y):=\int_{y}^{\infty}\frac{1}{\sqrt{2\pi}}e^{-x^{2}/2}dx$.
Note that \eqref{extended:CLT} extends the univariate case of the Hawkes process central
limit theorem considered in \cite{Bacry} since we allow $y=o(t^{1/6})$.
\end{remark}

\begin{remark}
By Theorem 3.3.1. in \cite{modphi}, we can also study the moderate deviations result. 
If $1\ll y\ll\sqrt{t}$ for $t\rightarrow\infty$
\footnote{For any two functions $f$ and $g$, $f(t)\ll g(t)$ as $t\rightarrow\infty$ means
$\frac{f(t)}{g(t)}\rightarrow 0$ as $t\rightarrow\infty$.}, i.e., the moderate deviations regime, 
and if we let: 
\begin{equation}
s_{t}:=\frac{\nu}{1-\Vert h\Vert_{L^{1}}}
+\frac{\sqrt{\nu}}{(1-\Vert h\Vert_{L^{1}})^{3/2}}\frac{y}{\sqrt{t}},
\end{equation} 
then, with $\eta'(\theta^{\ast})=s_{t}$, as $t\rightarrow\infty$,
\begin{equation}
\mathbb{P}\left(N_{t}\geq\frac{\nu}{1-\Vert h\Vert_{L^{1}}}t
+\sqrt{t}\frac{\sqrt{\nu}}{(1-\Vert h\Vert_{L^{1}})^{3/2}}y\right)
=\frac{e^{-tI(s_{t})}}{\theta^{\ast}\sqrt{2\pi t\eta''(\theta^{\ast})}}(1+o(1)).
\end{equation}
\end{remark}

\begin{remark}\label{m:4:remark}
By letting $m=4$ in Theorem \ref{MDPThm} and $I''(\eta'(0))=1/\eta''(0)$ and $I'''(\eta'(0))=-\eta'''(0)/(\eta''(0))^{3}$, we get 
a precise moderate deviation result, that is,
for any $y=o(t^{1/4})$, as $t\rightarrow\infty$,
\begin{equation}\label{precise:MDP}
\mathbb{P}\left(N_{t}\geq\frac{\nu}{1-\Vert h\Vert_{L^{1}}}
+\sqrt{t}\frac{\sqrt{\nu}}{(1-\Vert h\Vert_{L^{1}})^{3/2}}y\right)
=\frac{(1+o(1))}{y\sqrt{2\pi}}e^{-\frac{y^{2}}{2}}
e^{\frac{\eta'''(0)}{6(\eta''(0))^{3/2}}\frac{y^{3}}{\sqrt{t}}}.
\end{equation}
Note that \eqref{MDPThm} gives a precise moderate deviation result,
and it provides a more precise tail estimate than \cite{ZhuMDP}.
To see this, let $y=\frac{(1-\Vert h\Vert_{L^{1}})^{3/2}}{\sqrt{\nu}}\frac{a(t)}{\sqrt{t}}x$, 
where $x$ is a constant independent of $t$.
Then for $t^{1/2}\ll a(t)\ll t^{3/4}$, as $t\rightarrow\infty$, we have
\begin{align*}
&\mathbb{P}\left(N_{t}\geq\frac{\nu}{1-\Vert h\Vert_{L^{1}}}t
+a(t)x\right)
\\
&=\frac{\sqrt{\nu}(1+o(1))}{(1-\Vert h\Vert_{L^{1}})^{3/2}\sqrt{2\pi}}\frac{\sqrt{t}}{a(t)x}e^{-\frac{(1-\Vert h\Vert_{L^{1}})^{3}}{2\nu}
\frac{a(t)^{2}}{t}x^{2}}
e^{\frac{\eta'''(0)}{6(\eta''(0))^{3}}\frac{a(t)^{3}}{t^{2}}x^{3}}
\\
&=\frac{\sqrt{\nu}(1+o(1))}{(1-\Vert h\Vert_{L^{1}})^{3/2}\sqrt{2\pi}}\frac{\sqrt{t}}{a(t)x}e^{-\frac{(1-\Vert h\Vert_{L^{1}})^{3}}{2\nu}
\frac{a(t)^{2}}{t}x^{2}}
e^{\frac{(1+2\Vert h\Vert_{L^{1}})(1-\Vert h\Vert_{L^{1}})^{4}}{6\nu^{2}}\frac{a(t)^{3}}{t^{2}}x^{3}},
\end{align*}
where $\eta'(0)=\frac{\nu}{1-\Vert h\Vert_{L^{1}}}$,
$\eta''(0)=\frac{\nu}{(1-\Vert h\Vert_{L^{1}})^{3}}$,
and
$\eta'''(0)=\nu\frac{1+2\Vert h\Vert_{L^{1}}}{(1-\Vert h\Vert_{L^{1}})^{5}}$
\footnote{The derivation of $\eta'''(0)$ will be given in the proof of Theorem \ref{MDPThm}.}.
\end{remark}

%%%%%%%%%%%%%%%%%%%%%%%%%%%%%%%%%%%%%%%%%%%%%%%%%%%%%%%%%%%%%
\section{Numerical Illustrations}\label{sec:numerics}

In this section, we illustrate our precise deviations results
by comparing the approximation of the tail probability $\mathbb{P}(N_{t}\geq xt)$
by using our formulas and by using Monte Carlo simulations.
Since the event $\{N_{t}\geq xt\}$ we are interested in is a rare event, 
we will first develop the importance sampling.
Rare event simulations using importance sampling
have been studied for affine point processes, a generalization
of the linear Hawkes process when the exciting function
is exponential, see \cite{Zhang,ZhangII}.
We are interested in the importance sampling 
for linear Hawkes processes with general exciting function,
which are non-Markovian in general, and is not covered
in \cite{Zhang,ZhangII}.
The importance sampling has also been used
to estimate the ruin probability in a risk model where
the arrival process of the claims follows a Hawkes process \cite{Stabile}.

We are interested to estimate right tail probability
$\mathbb{P}\left(N_{t}\geq xt\right)$, where $x>\frac{\nu}{1-\Vert h\Vert_{L^{1}}}$.
Note that $\{N_{t}\geq xt\}$ is a rare event
for $x>\frac{\nu}{1-\Vert h\Vert_{L^{1}}}$. 
The idea of the importance sampling is to change
the measure from $\mathbb{P}$ to a new measure $\hat{\mathbb{P}}$
under which the event $\{N_{t}\geq xt\}$ becomes a typical event.

Let us define a new probability measure $\hat{\mathbb{P}}$
under which the $N_{t}$ process is again a linear Hawkes process,
but with baseline intensity $\gamma\nu$ and the exciting 
function $\gamma h(\cdot)$, where $\gamma$ is a positive constant
to be chosen later. 
Under the new measure, the $N_{t}$ process
has the intensity $\hat{\lambda}_{t}=\gamma\lambda_{t}$.
We have the following result which will be proved in the Appendix.

\begin{proposition}\label{Prop:IS}
\begin{equation}
\mathbb{P}(N_{t}\geq xt)
=e^{-tI(x)}\hat{\mathbb{E}}\left[1_{\{N_{t}\geq xt\}}
\cdot 
e^{((\gamma-1)\Vert h\Vert_{L^{1}}-\log\gamma)(N_{t}-xt)-(\gamma-1)\int_{0}^{t}H(t-u)dN_{u}}\right],
\end{equation}
where $H(t):=\int_{t}^{\infty}h(s)ds$, 
and $I(x)$ is given in \eqref{Ifunction}.
\end{proposition}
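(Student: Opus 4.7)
The strategy is a Girsanov-type change of measure for simple point processes with stochastic intensity, followed by a Fubini computation of the compensator and a final algebraic identification with $e^{-tI(x)}$.

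Under $\hat{\mathbb{P}}$ the process $N$ is by construction a Hawkes process with baseline $\gamma\nu$ and kernel $\gamma h$, so its stochastic intensity equals $\hat{\lambda}_{s}=\gamma\nu+\int_{0}^{s-}\gamma h(s-u)dN_{u}=\gamma\lambda_{s}$. Both $\lambda$ and $\hat{\lambda}$ are strictly positive (since $\nu>0$) and locally bounded, so the standard likelihood-ratio formula for point processes yields
\begin{equation*}
L_{t}:=\frac{d\mathbb{P}}{d\hat{\mathbb{P}}}\bigg|_{\mathcal{F}_{t}}
=\exp\!\left(\int_{0}^{t}\log\frac{\lambda_{s}}{\hat{\lambda}_{s}}dN_{s}-\int_{0}^{t}(\lambda_{s}-\hat{\lambda}_{s})ds\right)
=\exp\!\left(-(\log\gamma)N_{t}+(\gamma-1)\int_{0}^{t}\lambda_{s}ds\right).
\end{equation*}
Applying Fubini with the substitution $v=s-u$,
\begin{equation*}
\int_{0}^{t}\lambda_{s}ds=\nu t+\int_{0}^{t}\!\!\int_{u}^{t}h(s-u)\,ds\,dN_{u}=\nu t+\Vert h\Vert_{L^{1}}N_{t}-\int_{0}^{t}H(t-u)dN_{u},
\end{equation*}
since $\int_{0}^{t-u}h(v)dv=\Vert h\Vert_{L^{1}}-H(t-u)$. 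Substituting back gives
\begin{equation*}
L_{t}=\exp\!\Bigl(\bigl((\gamma-1)\Vert h\Vert_{L^{1}}-\log\gamma\bigr)N_{t}+(\gamma-1)\nu t-(\gamma-1)\!\int_{0}^{t}H(t-u)dN_{u}\Bigr).
\end{equation*}

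Starting from $\mathbb{P}(N_{t}\geq xt)=\hat{\mathbb{E}}[1_{N_{t}\geq xt}L_{t}]$, I would write $N_{t}=(N_{t}-xt)+xt$ in the first exponent so as to pull the deterministic factor $\exp\!\bigl(((\gamma-1)\Vert h\Vert_{L^{1}}-\log\gamma)xt+(\gamma-1)\nu t\bigr)$ out of the expectation, leaving the integrand claimed in the proposition. It only remains to verify the scalar identity
\begin{equation*}
I(x)=x\log\gamma-(\gamma-1)(\nu+x\Vert h\Vert_{L^{1}}),
\end{equation*}
which, by direct substitution into \eqref{Ifunction}, holds for the natural tilt $\gamma=x/(\nu+x\Vert h\Vert_{L^{1}})$; this is precisely the value that makes $x$ the law-of-large-numbers limit of $N_{t}/t$ under $\hat{\mathbb{P}}$, since the tilted Hawkes process has mean $\gamma\nu/(1-\gamma\Vert h\Vert_{L^{1}})=x$.

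The main obstacle is purely formal, namely the invocation of the likelihood-ratio theorem; but since $N$ has finitely many jumps on $[0,t]$ almost surely and both intensities are strictly positive and locally bounded, this is a standard application of the Br\'{e}maud--Jacod change-of-measure result for point processes. The Fubini step and the final algebraic matching with $I(x)$ are then routine.
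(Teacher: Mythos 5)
Your proposal is correct and follows essentially the same route as the paper's proof: the Br\'{e}maud--Jacod likelihood ratio with $\hat{\lambda}_{s}=\gamma\lambda_{s}$, the Fubini computation $\int_{0}^{t}\lambda_{s}ds=\nu t+\Vert h\Vert_{L^{1}}N_{t}-\int_{0}^{t}H(t-u)dN_{u}$, and the extraction of the deterministic factor $e^{-tI(x)}$. You even make explicit the algebraic identity $I(x)=x\log\gamma-(\gamma-1)(\nu+x\Vert h\Vert_{L^{1}})$ and the required choice $\gamma=x/(\nu+x\Vert h\Vert_{L^{1}})$, which the paper leaves implicit in its final step.
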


Our numerical illustrations include three different methods: (1) importance sampling; (2) precise deviations
up to the first-order approximation; (3) precise deviations up to the second-order approximation.

(1) \textit{Importance sampling}. 
We simulate under $\hat{\mathbb{E}}$
a Hawkes process $N_{t}$ with intensity:
\begin{equation}
\hat{\lambda}_{t}=\gamma\nu+\gamma\int_{0}^{t-}h(t-s)dN_{s},
\end{equation}
where $\gamma=\frac{x}{\nu+\Vert h\Vert_{L^{1}}x}$.
Note that 
under the new measure $\hat{\mathbb{P}}$, we have
$\hat{\mathbb{P}}\left(\frac{N_{t}}{t}\rightarrow\frac{\gamma\nu}{1-\gamma\Vert h\Vert_{L^{1}}}\right)=1$.
By choosing $\gamma$ as $\gamma=\frac{x}{\nu+\Vert h\Vert_{L^{1}}x}$,
we have $\frac{\gamma\nu}{1-\gamma\Vert h\Vert_{L^{1}}}=x$,
and it follows that $\{N_{t}\geq xt\}$ is a typical event under the new measure $\hat{\mathbb{P}}$.

Using importance sampling Monte Carlo method (Proposition \ref{Prop:IS}), we estimate
\begin{equation}\label{form1}
e^{-tI(x)}\hat{\mathbb{E}}\left[1_{\{N_{t}\geq xt\}}
\cdot 
e^{((\gamma-1)\Vert h\Vert_{L^{1}}-\log\gamma)(N_{t}-xt)-(\gamma-1)\int_{0}^{t}H(t-u)dN_{u}}\right],
\end{equation}
where $H(t):=\int_{t}^{\infty}h(s)ds$ denotes
the right tail of the exciting function, and
$I(x)=x\log\left(\frac{x}{\nu+x\Vert h\Vert_{L^{1}}}\right)-x+x\Vert h\Vert_{L^{1}}+\nu$
as is given in \eqref{Ifunction}.

(2) \textit{Precise deviations up to the first-order approximation}.
We approximate $\mathbb{P}(N_{t}\geq xt)$ by the formula:
\begin{equation}\label{form2}
e^{-tI(x)}\sqrt{\frac{I''(x)}{2\pi t}}c_{0},
\end{equation}
where $c_{0}=\frac{1}{1-e^{-\theta^{\ast}}}
\psi(\theta^{\ast})$, and 
$I''(x)=\frac{\nu^{2}}{x(\nu+\Vert h\Vert_{L^{1}}x)^{2}}$
is the second derivative of $I(x)$ defined in \eqref{Ifunction},
and $\theta^{\ast}=\log\left(\frac{x}{\nu+\Vert h\Vert_{L^{1}}x}\right)
-\frac{\Vert h\Vert_{L^{1}}x}{\nu+\Vert h\Vert_{L^{1}}x}+\Vert h\Vert_{L^{1}}$,
where
\begin{equation}
\psi(\theta)=e^{\nu\int_{0}^{\infty}[F(s;\theta)-x(\theta)]ds},
\end{equation}
where $F$ is the unique solution that satisfies
\begin{equation}\label{num1}
F(t;\theta)=e^{\theta+\int_{0}^{t}(F(t-s;\theta)-1)h(s)ds},
\end{equation}
with the constraint $|F(t;\theta)|\leq\frac{1}{\Vert h\Vert_{L^{1}}}$,
and $x(\theta)=F(\infty;\theta)$ is the unique solution to:
\begin{equation}\label{num2}
x(\theta)=e^{\theta+(x(\theta)-1)\Vert h\Vert_{L^{1}}},
\end{equation}
with the constraint $|x(\theta)|\leq\frac{1}{\Vert h\Vert_{L^{1}}}$.
Note that we need to solve \eqref{num1} and \eqref{num2} numerically.

(3) \textit{Precise deviations up to the second-order approximation}.
We approximate $\mathbb{P}(N_{t}\geq xt)$ by adding a higher-order term
to Eqn. \eqref{form2} as follows:
\begin{equation}\label{form3}
e^{-tI(x)}\sqrt{\frac{I''(x)}{2\pi t}}\left(c_{0}+\frac{c_{1}}{t}\right),
\end{equation}
where $c_{0}=\frac{\psi(\theta^{\ast})}{1-e^{-\theta^{\ast}}}$, 
and $c_{1}=\frac{b_{1}}{1-e^{-\theta^{\ast}}}$, 
where the formula for $b_{1}$ can be computed by applying Proposition \ref{prop:ab}
and Proposition \ref{prop:etapsi}. The details for the computations of $b_{1}$ can be found in the Appendix.

%%%%%%%%%%%%%%%%%%%%%%%%%%%%%%%%%%%%%%%%%%%
In our numerical illustrations, we take baseline intensity $\nu=1$,
and consider two different exciting functions:
$h(t)=e^{-2t}$;
$h(t)=\frac{1}{(1+t)^{3}}$.
In both cases, $\Vert h\Vert_{L^{1}}=\frac{1}{2}$,
and in the first case, $H(t)=\int_{t}^{\infty}h(s)ds=\frac{1}{2}e^{-2t}$,
and in the second case, 
$H(t)=\int_{t}^{\infty}h(s)ds=\frac{1}{2(1+t)^{2}}$.
We then compare the three different methods (1), (2) and (3)
by comparing \eqref{form1}, \eqref{form2} and \eqref{form3}.
We summarize the results in Table \ref{Table:Exp} 
when the exciting function has exponential decay $h(t)=e^{-2t}$
and Table \ref{Table:Poly}
when the exciting function has polynomial decay $h(t)=\frac{1}{(1+t)^{3}}$.
In Table \ref{Table:Exp}, we take $x=4$ and $x=5$, and consider
the times $t=5,10,25,40,50$. 
Numerically, we compute that $c_{0}=4.8$ and
$c_{1}=-22$.  
In Table \ref{Table:Poly}, we take $x=4$ and $x=5$, and consider
the times $t=5,10,25,40,50$.
Numerically, we compute that $c_{0}=3.51$ and $c_{1}=-24$.
In both tables, the column IS provides the numerical results
using the importance sampling;
the column 1st Order provides the numerical results
using the precise deviations formula
up to the first-order approximation;
the column 2nd Order provides the numerical results
using the precise deviations formula
up to the second-order approximation.
In both tables, we observe that as time $t$ gets larger,
the approximations get better. 
First-order approximation tend to overestimate the tail probability
while second-order approximation tend to underestimate
the tail probability in Table \ref{Table:Exp} while
that is not the case in Table \ref{Table:Poly}. This is due
to the positivity of $c_{0}$ and negativity of $c_{1}$.
As a result, when $t$ is small, second-order approximation
could give negative values which are unrealistic, e.g. when $t=5$ in Table \ref{Table:Exp} and Table \ref{Table:Poly}. 
In both tables, as $t$ becomes larger, second-order approximation provides
better approximation than the first-order approximation.

\begin{table}[htb]
\centering % used for centering table
\begin{tabular}{|c||c|c|c||c||c|c|c|} % centered columns 
\hline % inserts single horizontal line
$x=4$ & IS & 1st Order & 2nd Order & $x=5$ & IS & 1st Order & 2nd Order  
\\
\hline
\hline
$t=5$ & 4.71E-02 & 9.19E-02 & -2.08E-02 & $t=5$ & 1.37E-02 & 2.68E-02 & 1.40E-03
\\
\hline
$t=10$ & 2.06E-02 & 3.06E-02 & 1.18E-02 & $t=10$ & 3.19E-03 & 4.59E-03 & 2.41E-03
\\
\hline
$t=25$ & 1.62E-03 & 2.02E-03 & 1.52E-03 & $t=25$ & 3.49E-05 & 4.14E-05 & 3.35E-05
\\
\hline
$t=40$ & 1.46E-04 & 1.66E-04 & 1.41E-04 & $t=40$ & 4.20E-07 & 4.66E-07 & 4.11E-07
\\
\hline
$t=50$ & 2.93E-05 & 3.29E-05 & 2.89E-05 & $t=50$ & 2.14E-08 & 2.45E-08 & 2.22E-08
\\
\hline %inserts single line
\end{tabular}
\caption{
Numerical illustration of $\mathbb{P}(N_{t}\geq xt)$ 
for $h(t)=e^{-2t}$, $\nu=1$ and $x=4,5$.}
\label{Table:Exp} % is used to refer this table in the text
\end{table}

%%%%%%%%%%%%%%%%%%%%%%%%%%%%%%%%%%%%%%%%%%%%%%%%%%%%%%%%%%%%
\begin{table}[htb]
\centering % used for centering table
\begin{tabular}{|c||c|c|c||c||c|c|c|} % centered columns 
\hline % inserts single horizontal line
$x=4$ & IS & 1st Order & 2nd Order & $x=5$ & IS & 1st Order & 2nd Order  
\\
\hline
\hline
$t=5$ & 3.04E-02 & 7.39E-02 & -4.37E-02 & $t=5$ & 7.50E-03 & 1.94E-02 & -7.60E-03
\\
\hline
$t=10$ & 1.33E-02 & 2.46E-02 & 5.02E-02 & $t=10$ & 1.63E-03 & 3.33E-03 & 1.01E-03
\\
\hline
$t=25$ & 1.12E-03 & 1.62E-03 & 1.11E-03 & $t=25$ & 1.95E-05 & 2.99E-05 & 2.16E-05
\\
\hline
$t=40$ & 1.03E-04 & 1.34E-04 & 1.07E-04 & $t=40$ & 2.39E-07 & 3.38E-07 & 2.79E-07
\\
\hline
$t=50$ & 2.28E-05 & 2.65E-05 & 2.23E-05 & $t=50$ & 1.28E-08 & 1.78E-08 & 1.53E-08
\\
\hline %inserts single line
\end{tabular}
\caption{
Numerical illustration of $\mathbb{P}(N_{t}\geq xt)$ 
for $h(t)=1/(1+t)^{3}$, $\nu=1$ and $x=4,5$.}
\label{Table:Poly} % is used to refer this table in the text
\end{table}

%%%%%%%%%%%%%%%%%%%%%%%%%%%%%%%%%%%%%%%%%%%%%%%%%%%%%%%%%%%%

\section{Appendix: Proofs}\label{ProofsSection}

\subsection{Proofs of the Results in Section \ref{LDPSection}}

\begin{proof}[Proof of Proposition \ref{xprop}]
Consider $\theta\in\mathbb{R}$, and 
$x(\theta)=e^{\theta+\Vert h\Vert_{L^{1}}(x(\theta)-1)}$.
Then $x(\theta)$ is increasing in $\theta$
by $x(\theta)=\mathbb{E}[e^{\theta S_{\infty}}]$ and the definition of $S_{\infty}$.
Moreover, for $\theta=\theta_{c}=\Vert h\Vert_{L^{1}}-1-\log\Vert h\Vert_{L^{1}}$, 
we have
$x(\theta_{c})=\frac{1}{\Vert h\Vert_{L^{1}}}e^{\Vert h\Vert_{L^{1}}-1+\Vert h\Vert_{L^{1}}(x(\theta_{c})-1)}$,
which implies that $x(\theta_{c})\Vert h\Vert_{L^{1}}=1$.
Thus, for any $\theta\leq\theta_{c}$, $x(\theta)\Vert h\Vert_{L^{1}}\leq 1$,
which proves (i).
We also notice that
$x'(\theta)=\left(1+\Vert h\Vert_{L^{1}}x'(\theta)\right)x(\theta)$,
and therefore as $\theta\uparrow\theta_{c}$,
$x'(\theta)=\frac{x(\theta)}{1-\Vert h\Vert_{L^{1}}x(\theta)}\rightarrow\infty$,
that proves (ii), which also follows from Theorem 3.2.1. in \cite{Bordenave}.
\end{proof}

%%%%%%%%%%%%%%%%%%%%%%%%%%%%%%%%%%%%%%%%%%%%%%%%%%%%%%%%
\begin{proof}[Proof of Proposition \ref{uniqueProp}]
Suppose $F_{1}$ and $F_{2}$ are two solutions that
satisfy \eqref{FEqn} with $|F_{j}|\leq\frac{1}{\Vert h\Vert_{L^{1}}}$, for $j=1,2$.
Note that for any $z_{1},z_{2}\in\mathbb{C}$,
\begin{align*}
|e^{z_{1}}-e^{z_{2}}|
&=\left|\sum_{n=1}^{\infty}\frac{z_{1}^{n}-z_{2}^{n}}{n!}\right|
\\
&\leq|z_{1}-z_{2}|\sum_{n=1}^{\infty}\frac{|z_{1}|^{n-1}+|z_{1}|^{n-2}|z_{2}|+\cdots+|z_{1}||z_{2}|^{n-2}+|z_{2}|^{n-1}}{n!}
\\
&\leq|z_{1}-z_{2}|\sum_{n=1}^{\infty}\frac{n(|z_{1}|+|z_{2}|)^{n-1}}{n!}=|z_{1}-z_{2}|e^{|z_{1}|+|z_{2}|}.
\end{align*}
Hence, for any $T>0$ and for any $0\leq t\leq T$,
\begin{align*}
|F_{1}(t;z)-F_{2}(t;z)|
&=|e^{z}|\left|e^{\int_{0}^{t}(F_{1}(t-s;z)-1)h(s)ds}-e^{\int_{0}^{t}(F_{2}(t-s;z)-1)h(s)ds}\right|
\\
&\leq |e^{z}|\left|\int_{0}^{t}(F_{1}(t-s;z)-F_{2}(t-s;z))h(s)ds\right|
\\
&\qquad\qquad\qquad\qquad
\cdot
e^{|\int_{0}^{t}(F_{1}(t-s;z)-1)h(s)ds|+|\int_{0}^{t}(F_{2}(t-s;z)-1)h(s)ds|}
\\
&\leq |e^{z}|\Vert h\Vert_{L^{\infty}[0,T]}
e^{2\left(\frac{1}{\Vert h\Vert_{L^{1}}}+1\right)\Vert h\Vert_{L^{1}}}
\int_{0}^{t}|F_{1}(s;z)-F_{2}(s;z)|ds.
\end{align*}
Note that $F_{1}(0;z)=F_{2}(0;z)=e^{z}$ from \eqref{FEqn}.
By Gronwall's inequality, we conclude that $F_{1}\equiv F_{2}$.
\end{proof}

%%%%%%%%%%%%%%%%%%%%%%%%%%%%%%%%%%%%%%%%%%%%%%%%%%%%%%%%%%%%%%%

\begin{proof}[Proof of Lemma \ref{divLemma}]
Note that $Y=\sum_{i=1}^{K}Z_{i}$ in distribution,
where $K$ is Poisson random variable with parameter $\nu$
and $Z_{i}$ are i.i.d. random variable interpreted
as the total number of the nodes in a Galton-Watson tree
with the number of children being born in each generation
Poisson distributed with parameter $\Vert h\Vert_{L^{1}}$, independent of $K$.
A compound Poisson random variable is infinitely divisible, which
is evident from the expression of its characteristic function.
Hence, $Y$ has an infinitely divisible distribution.
\end{proof}

%%%%%%%%%%%%%%%%%%%%%%%%%%%%%%%%%%%%%%%%%%%%%
\begin{proof}[Proof of Lemma \ref{MainLemma}]
Firstly, it is obvious that for any $s>0$, and $t>0$,
$F(s,z)$, $x(z)$, and $\int_0^t (F(s,z)-x(z)) ds$
are analytic in $\mathcal{S}_{(-\infty,\theta_{c})}$.
Since $\Vert h\Vert_{L^{1}}<1$ and $x(0)=1$, 
for $\mathcal{R}(z)<\Vert h\Vert_{L^{1}}-1-\log\Vert h\Vert_{L^{1}}$, 
we have 
$
|x(z)|\Vert h\Vert_{L^{1}}
\leq x(\mathcal{R}(z))\Vert h\Vert_{L^{1}}
<1.
$
Therefore, for any compact subset $K\subset \{z\in\mathbb C;\mathcal{R}(z)<\Vert h\Vert_{L^{1}}-1-\log\Vert h\Vert_{L^{1}}\}$,
we have
\begin{equation*}
\sup_{z\in K}|x(z)|\Vert h\Vert_{L^{1}}
\leq x(\mathcal{R}(z))\Vert h\Vert_{L^{1}}
<1.
\end{equation*}
Since $\int_{s}^{\infty}h(u)du\rightarrow 0$ as $s\rightarrow\infty$ and $F(s;z)=\mathbb{E}[e^{zS_{s}}]\rightarrow x(z)=\mathbb{E}[e^{zS_{\infty}}]$,
we have that
\begin{equation*}
\int_{0}^{s}\sup_{z\in K}[F(s-u;z)-x(z)]h(u)du\rightarrow 0,
\qquad
\text{as $s\rightarrow\infty$}.
\end{equation*}
Note that
\begin{equation*}
F(s;z)-x(z)=x(z)\left[e^{\int_{0}^{s}[F(s-u;z)-x(z)]h(u)du-(x(z)-1)\int_{s}^{\infty}h(u)du}-1\right],
\end{equation*}
and for any fixed $\delta>0$
such that $(1+\delta)\sup_{z\in K}|x(z)|\Vert h\Vert_{L^{1}}<1$, 
there exists $M>0$, so that for any $s\geq M$ and $z\in K$, we have
\begin{equation*}
|F(s;z)-x(z)|\leq (1+\delta)|x(z)| \left[\int_{0}^{s}|F(s-u;z)-x(z)|h(u)du+|x(z)-1|\int_{s}^{\infty}h(u)du\right].
\end{equation*}
Therefore, we get that for any $T> M$,
\begin{align*}
\int_{M}^{T}\sup_{z\in K}|F(s;z)-x(z)|ds
&\leq
(1+\delta)\sup_{z\in K}|x(z)|\int_{M}^{T}\int_{0}^{s}\sup_{z\in K}|F(s-u;z)-x(z)|h(u)duds
 \\
 &\qquad\qquad
 +(1+\delta)\sup_{z\in K}|x(z)||x(z)-1|\int_{M}^{T}\int_{s}^{\infty}h(u)duds
\\
&\leq
(1+\delta)\sup_{z\in K}|x(z)|\int_{0}^{T}\int_{0}^{s}\sup_{z\in K}|F(s-u;z)-x(z)|h(u)duds
 \\
 &\qquad\qquad
+(1+\delta)\sup_{z\in K}|x(z)||x(z)-1|\int_{0}^{\infty}\int_{s}^{\infty}h(u)duds,
\end{align*}
which implies that 
\begin{align*}
&\int_{0}^{T}\sup_{z\in K}|F(s;z)-x(z)|ds
\\
 &\leq \int_{0}^{M}\sup_{z\in K}|F(s;z)-x(z)|ds
+(1+\delta)\sup_{z\in K}|x(z)|\int_{0}^{T}\int_{0}^{s}\sup_{z\in K}|F(s-u;z)-x(z)|h(u)duds
 \\
 &\qquad\qquad\qquad\qquad
+(1+\delta)\sup_{z\in K}|x(z)||x(z)-1|\int_{0}^{\infty}\int_{s}^{\infty}h(u)duds
\\
 &=\int_{0}^{M}\sup_{z\in K}|F(s;z)-x(z)|ds
+(1+\delta)\sup_{z\in K}|x(z)|\int_{0}^{T}\left[\int_{u}^{T}\sup_{z\in K}|F(s-u;z)-x(z)|ds\right]h(u)du
 \\
 &\qquad\qquad\qquad\qquad
+(1+\delta)\sup_{z\in K}|x(z)||x(z)-1|\int_{0}^{\infty}\int_{s}^{\infty}h(u)duds
\\
&\leq \int_{0}^{M}\sup_{z\in K}|F(s;z)-x(z)|ds
+
(1+\delta)\sup_{z\in K}|x(z)| \Vert h\Vert_{L^{1}}\int_{0}^{T}|F(s;z)-x(z)|ds
\\
&\qquad\qquad\qquad\qquad
+(1+\delta)\sup_{z\in K}|x(z)|\left[\sup_{z\in K}|x(z)|+1\right]\int_{0}^{\infty}sh(s)ds,
\end{align*}
which holds for any $T>M$, and thus we have  
\begin{equation}
\begin{aligned}
&\int_{0}^{\infty}\sup_{z\in K}|F(s;z)-x(z)|ds\\
&\leq\frac{ \int_{0}^{M}\sup_{z\in K}|F(s;z)-x(z)|ds
+(1+\delta)\sup_{z\in K}|x(z)|\left[\sup_{z\in K}|x(z)|+1\right]\int_{0}^{\infty}sh(s)ds}
{1-(1+\delta)\sup_{z\in K}|x(z)| \Vert h\Vert_{L^{1}}}.
\end{aligned}
\end{equation}
Hence, we conclude that $\int_{t}^{\infty}\sup_{z\in K}|F(s;z)-x(z)|ds\to 0$ as $t\to \infty$, and so
\begin{equation}\label{F:t:infty}
\int_0^t (F(s,z)-x(z)) ds\to \int_0^\infty (F(s,z)-x(z)) ds,
\end{equation}
as $t\rightarrow\infty$, locally uniformly in $z$ for $\mathcal{R}(z)<\theta_{c}$.
Hence, $\varphi(z)$ is well-defined and is analytic in $\mathcal{S}_{(-\infty,\theta_{c})}$.
By equations \eqref{NEqn}, \eqref{F:t:infty} and the definitions 
of $\psi(z)$ and $\varphi(z)$ in \eqref{defn:psi:varphi}
we have proved that locally uniformly in $z$ for $\mathcal{R}(z)<\theta_{c}$,
\begin{equation*} 
e^{-t(\nu(x(z)-1))}\mathbb{E}[e^{zN_{t}}]=e^{\nu\int_0^t (F(s,z)-x(z)) ds}\to e^{\nu \varphi(z)}=\psi(z),
\qquad
\text{as $t\rightarrow\infty$}.
\end{equation*}

To show the mod-$\phi$ convergence at speed $O(t^{-v})$, that is,
for any compact set $K$, there exists some $C_{K}>0$ such that
$\sup_{z\in K}|e^{-t(\nu(x(z)-1))}\mathbb{E}[e^{zN_{t}}]
-e^{\nu \varphi(z)}|\leq C_{K}t^{-v}$, it suffices to show that
for any compact set $K\subset \{z\in\mathbb C;\mathcal{R}(z)<\Vert h\Vert_{L^{1}}-1-\log\Vert h\Vert_{L^{1}}\}$,
\begin{equation}\label{t:v:integral}
\int_{0}^{\infty}t^{v}\sup_{z\in K}|F(t;z)-x(z)|dt<\infty.
\end{equation}
To see this, notice that 
\begin{align*}
\sup_{z\in K}\left|e^{-t(\nu(x(z)-1))}\mathbb{E}[e^{zN_{t}}]
-e^{\nu \varphi(z)}\right|
&=\sup_{z\in K}\left|e^{\nu\varphi(z)}
\left(e^{\nu\int_{t}^{\infty}(F(s;z)-x(z))ds}-1\right)\right|
\\
&\leq
\sup_{z\in K}e^{\nu|\varphi(z)|}
\left(e^{\nu\int_{t}^{\infty}\sup_{z\in K}|F(s;z)-x(z)|ds}-1\right).
\end{align*}
Thus it suffices to show that 
\begin{equation}\label{tail:t:v}
\int_{t}^{\infty}\sup_{z\in K}|F(s;z)-x(z)|ds\leq c_{K}t^{-v},
\end{equation}
for some $c_{K}>0$.
For any $T>0$, by integration by parts,
\begin{align}
&\int_{0}^{T}t^{v}\sup_{z\in K}|F(t;z)-x(z)|dt
\nonumber
\\
&=-\int_{0}^{T}t^{v}d\left(\int_{t}^{\infty}\sup_{z\in K}|F(s;z)-x(z)|ds\right)
\nonumber
\\
&=-T^{v}\int_{T}^{\infty}\sup_{z\in K}|F(s;z)-x(z)|ds
+v\int_{0}^{T}t^{v-1}\int_{t}^{\infty}\sup_{z\in K}|F(s;z)-x(z)|dsdt.\label{integration:by:parts}
\end{align}
On the other hand, by Fubini's theorem and \eqref{t:v:integral}, we have
\begin{align}
v\int_{0}^{\infty}t^{v-1}\int_{t}^{\infty}\sup_{z\in K}|F(s;z)-x(z)|dsdt
&=v\int_{0}^{\infty}\int_{0}^{s}t^{v-1}\sup_{z\in K}|F(s;z)-x(z)|dtds
\nonumber
\\
&=\int_{0}^{\infty}s^{v}\sup_{z\in K}|F(s;z)-x(z)|dtds<\infty.\label{integral:equiv}
\end{align}
Hence we get
$\lim_{T\rightarrow\infty}T^{v}\int_{T}^{\infty}\sup_{z\in K}|F(s;z)-x(z)|ds=0$
by letting $T\rightarrow\infty$ in \eqref{integration:by:parts} and applying 
\eqref{integral:equiv}.
This implies \eqref{tail:t:v}.

Next, let us prove \eqref{t:v:integral}.
Notice that there exists $M>0$ so that for any $s\geq M$,
\begin{align*}
\sup_{z\in\mathbb{Z}}|F(s;z)-x(z)|
&\leq (1+\delta)\sup_{z\in K}|x(z)| 
\int_{0}^{s}\sup_{z\in K}|F(s-u;z)-x(z)|h(u)du
\\
&\qquad
+(1+\delta)\sup_{z\in K}|x(z)|\sup_{z\in K}|x(z)-1|\int_{s}^{\infty}h(u)du.
\end{align*}
Therefore, for every $s\geq 0$,
\begin{equation}\label{ineq:iterate}
\sup_{z\in K}|F(s;z)-x(z)|
\leq (1+\delta)\sup_{z\in K}|x(z)| 
\int_{0}^{s}\sup_{z\in K}|F(s-u;z)-x(z)|h(u)du
+g(s),
\end{equation}
where
\begin{equation}
g(s):=C_{1}\int_{s}^{\infty}h(u)du
+C_{2}1_{\{s\leq M\}},
\end{equation}
where
\begin{equation}
C_{1}:=(1+\delta)\sup_{z\in K}|x(z)|\sup_{z\in K}|x(z)-1|,
\qquad
C_{2}:=\sup_{0\leq s\leq M}\sup_{z\in K}|F(s;z)-x(z)|.
\end{equation}
Let $p(s):=\sup_{z\in K}|F(s;z)-x(z)|$
and $q(s):=(1+\delta)\sup_{z\in K}|x(z)|h(s)$
for every $s\geq 0$. 
Then, \eqref{ineq:iterate} can be re-written as
$p(s)\leq\int_{0}^{s}q(s-u)p(u)du+g(s)$, $s\geq 0$,
which implies
\begin{align*}
p(s)&\leq\int_{0}^{s}q(s-u)\left(\int_{0}^{u}q(u-v)p(v)dv+g(u)\right)du+g(s)
\\
&=\int_{0}^{s}q^{\ast 2}(s-u)p(u)du+\int_{0}^{s}q^{\ast 1}(s-u)g(u)du+g(s),
\qquad s\geq 0.
\end{align*}
By iterating and following the same argument as in the proof
of generalized Gronwall's inequality in \cite{CM},
we have
\begin{equation*}
p(s)\leq g(s)+\int_{0}^{s}\sum_{k=1}^{\infty}q^{\ast k}(s-u)g(u)du,
\end{equation*}
which is equivalent to
\begin{equation}
\sup_{z\in K}|F(s;z)-x(z)|
\leq g(s)+\int_{0}^{s}G(s-u)g(u)du,
\end{equation}
where
\begin{equation}
G(t):=\sum_{k=1}^{\infty}\left((1+\delta)\sup_{z\in K}|x(z)| \right)^{k}h^{\ast k}(t),
\end{equation}
where $h^{\ast k}(t)$ is the $k$-th order convolution of $h$, i.e.
$h^{\ast 0}(t):=h(t)$ and $h^{\ast m}(t):=\int_{0}^{t}h^{\ast(m-1)}(t-s)h(s)ds$
for every $m\in\mathbb{N}$.
Hence, it remains to show that
\begin{align}
&\int_{0}^{\infty}s^{v}g(s)ds<\infty,\label{remain:1}
\\
&\int_{0}^{\infty}s^{v}\int_{0}^{s}G(s-u)g(u)duds<\infty.\label{remain:2}
\end{align}
Let us first prove \eqref{remain:1}.
Note that
\begin{equation}
\int_{0}^{\infty}s^{v}g(s)ds
=C_{1}\int_{0}^{\infty}s^{v}\int_{s}^{\infty}h(u)duds
+C_{2}\int_{0}^{M}s^{v}dv,
\end{equation}
and by our assumption,
\begin{equation}
\int_{0}^{\infty}s^{v}\int_{s}^{\infty}h(u)duds
=\int_{0}^{\infty}\int_{0}^{u}s^{v}h(u)dsdu
=\frac{1}{v+1}\int_{0}^{\infty}u^{v+1}h(u)du<\infty,
\end{equation}
thus \eqref{remain:1} follows.

Next, let us prove \eqref{remain:2}.
Note that
\begin{align*}
&\int_{0}^{\infty}s^{v}\int_{0}^{s}G(s-u)g(u)duds
\\
&=C_{1}\int_{0}^{\infty}s^{v}\int_{0}^{s}G(s-u)\int_{u}^{\infty}h(x)dxduds
+C_{2}\int_{0}^{\infty}s^{v}\int_{0}^{s}G(s-u)1_{\{u\leq M\}}duds,
\end{align*}
and it is easy to check that
\begin{align*}
\int_{0}^{\infty}s^{v}\int_{0}^{s}G(s-u)1_{\{u\leq M\}}duds
&=\int_{0}^{\infty}\int_{u}^{\infty}s^{v}G(s-u)ds1_{\{u\leq M\}}du
\\
&=\int_{0}^{M}\int_{0}^{\infty}(s+u)^{v}G(s)dsdu
\\
&\leq 2^{v-1}M\int_{0}^{\infty}(s^{v}+M^{v})G(s)ds,
\end{align*}
where we used the inequality $(a+b)^{v}\leq 2^{v-1}(a^{v}+b^{v})$ for any $a,b>0$ and $v\geq 1$.
We can compute that
\begin{equation*}
\int_{0}^{\infty}G(s)ds
=\sum_{k=1}^{\infty}\left((1+\delta)\sup_{z\in K}|x(z)|\right)^{k}
\int_{0}^{\infty}h^{\ast k}(s)ds
=\sum_{k=1}^{\infty}\left((1+\delta)\sup_{z\in K}|x(z)|\right)^{k}
\Vert h\Vert_{L^{1}}^{k},
\end{equation*}
which is finite since $(1+\delta)\sup_{z\in K}|x(z)|\Vert h\Vert_{L^{1}}<1$.
Next, let us show that 
\begin{equation}
\int_{0}^{\infty}s^{v}G(s)ds<\infty.
\end{equation}
Notice that
\begin{align*}
\int_{0}^{\infty}s^{v}G(s)ds
&=\sum_{k=1}^{\infty}\left((1+\delta)\sup_{z\in K}|x(z)|\right)^{k}
\int_{0}^{\infty}s^{v}h^{\ast k}(s)ds
\\
&=\sum_{k=1}^{\infty}\left((1+\delta)\sup_{z\in K}|x(z)|\right)^{k}
\int_{0}^{\infty}s^{v}\int_{0}^{s}h(s-u)h^{\ast(k-1)}(u)duds
\\
&=\sum_{k=1}^{\infty}\left((1+\delta)\sup_{z\in K}|x(z)|\right)^{k}
\int_{0}^{\infty}\int_{u}^{\infty}s^{v}h(s-u)dsh^{\ast(k-1)}(u)du
\\
&=\sum_{k=1}^{\infty}\left((1+\delta)\sup_{z\in K}|x(z)|\right)^{k}\int_{0}^{\infty}\int_{0}^{\infty}(s+u)^{v}h(s)dsh^{\ast(k-1)}(u)du.
\end{align*}
Note that for any $\delta'>0$, there exists
some $C(\delta')>0$ such that
for any $s,u\geq 0$, 
\begin{equation}
(s+u)^{v}\leq C(\delta')s^{v}+(1+\delta')u^{v}.
\end{equation}
Therefore
\begin{align*}
&\int_{0}^{\infty}s^{v}h^{\ast k}(s)ds
\\
&\leq
C(\delta')\int_{0}^{\infty}s^{v}h(s)ds
\int_{0}^{\infty}h^{\ast(k-1)}(u)du
+(1+\delta')\int_{0}^{\infty}h(s)ds\int_{0}^{\infty}u^{v}h^{\ast(k-1)}(u)du
\\
&=C(\delta')\int_{0}^{\infty}s^{v}h(s)ds\Vert h\Vert_{L^{1}}^{k-1}
+(1+\delta')\Vert h\Vert_{L^{1}}\int_{0}^{\infty}u^{v}h^{\ast(k-1)}(u)du.
\end{align*}
Let us define $A_{k}:=\int_{0}^{\infty}s^{v}h^{\ast k}(s)ds$.
Then, we have:
\begin{equation}
A_{k}\leq C(\delta')\Vert h\Vert_{L^{1}}^{k-1}A_{1}
+(1+\delta')\Vert h\Vert_{L^{1}}A_{k-1}.
\end{equation}
It follows that
\begin{align*}
A_{k}&\leq
C(\delta')\Vert h\Vert_{L^{1}}^{k-1}
\left[1+(1+\delta')\Vert h\Vert_{L^{1}}
+((1+\delta')\Vert h\Vert_{L^{1}})^{2}
+\cdots
+((1+\delta')\Vert h\Vert_{L^{1}})^{k-2}\right]A_{1}
\\
&\qquad\qquad\qquad
+\left((1+\delta')\Vert h\Vert_{L^{1}}\right)^{k-1}A_{1}.
\end{align*}
Choose $\delta'>0$ to be sufficiently small so that $(1+\delta')\Vert h\Vert_{L^{1}}<1$.
Then, we have
\begin{align*}
A_{k}&\leq\frac{C(\delta')}{1-(1+\delta')\Vert h\Vert_{L^{1}}}\Vert h\Vert_{L^{1}}^{k-1}A_{1}
+(1+\delta')^{k-1}\Vert h\Vert_{L^{1}}^{k-1}A_{1}
\\
&\leq
\left(\frac{C(\delta')}{1-(1+\delta')\Vert h\Vert_{L^{1}}}+1\right)(1+\delta')^{k-1}\Vert h\Vert_{L^{1}}^{k-1}A_{1}
\end{align*}
Choose $\delta'>0$ to be sufficiently small
so that $(1+\delta)(1+\delta')\sup_{z\in K}|x(z)|\Vert h\Vert_{L^{1}}<1$.
Hence, we conclude that
\begin{align*}
&\int_{0}^{\infty}s^{v}G(s)ds
\\
&=\sum_{k=1}^{\infty}\left((1+\delta)\sup_{z\in K}|x(z)|\right)^{k}A_{k}
\\
&\leq
(1+\delta)\sup_{z\in K}|x(z)|A_{1}
\left(\frac{C(\delta')}{1-(1+\delta')\Vert h\Vert_{L^{1}}}+1\right)
\sum_{k=1}^{\infty}\left((1+\delta)\sup_{z\in K}|x(z)|(1+\delta')\Vert h\Vert_{L^{1}}\right)^{k-1}
\\
&=
\frac{(1+\delta)\sup_{z\in K}|x(z)|A_{1}
\left(\frac{C(\delta')}{1-(1+\delta')\Vert h\Vert_{L^{1}}}+1\right)}{1-(1+\delta)(1+\delta')\sup_{z\in K}|x(z)|\Vert h\Vert_{L^{1}}}<\infty.
\end{align*}

Finally, we can compute that 
\begin{align*}
&\int_{0}^{\infty}s^{v}\int_{0}^{s}G(s-u)\int_{u}^{\infty}h(x)dxduds
\\
&=\int_{0}^{\infty}\int_{u}^{\infty}s^{v}G(s-u)ds\int_{u}^{\infty}h(x)dxdu
\\
&=\int_{0}^{\infty}\int_{0}^{\infty}(s+u)^{v}G(s)ds\int_{u}^{\infty}h(x)dxdu
\\
&\leq
2^{v-1}\int_{0}^{\infty}\int_{0}^{\infty}(s^{v}+u^{v})G(s)ds\int_{u}^{\infty}h(x)dxdu
\\
&=2^{v-1}\int_{0}^{\infty}s^{v}G(s)ds
\int_{0}^{\infty}sh(s)ds
+2^{v-1}\int_{0}^{\infty}G(s)ds
\cdot\frac{1}{v+1}\int_{0}^{\infty}s^{v+1}h(s)ds<\infty.
\end{align*}
This completes the proof.
\end{proof}

%%%%%%%%%%%%%%%%%%%%%%%%%%%%%%%%%%%%%%%%%%%%%%%%%%%%%%%%%%%%%%%

\begin{proof}[Proof of Theorem \ref{LDPThm}]
By Lemma \ref{divLemma}, and Lemma \ref{MainLemma}, 
we have established the mod-$\phi$ convergence.
Hence, by Theorem 3.2.2. in \cite{modphi}, 
for any $x>0$, and $tx\in\mathbb{N}$, 
\begin{equation}
\mathbb{P}(N_{t}=tx)
=\frac{e^{-tI(x)}}{\sqrt{2\pi t\eta''(\theta^{\ast})}}
\left(\psi(\theta^{\ast})+\frac{a_{1}}{t}+\frac{a_{2}}{t^{2}}
+\cdots+\frac{a_{v-1}}{t^{v-1}}
+O\left(\frac{1}{t^{v}}\right)\right),
\end{equation}
and for any $x>\eta'(0)=\frac{\nu}{1-\Vert h\Vert_{L^{1}}}$ and $tx\in\mathbb{N}$, 
\begin{equation}
\mathbb{P}(N_{t}\geq tx)
=\frac{e^{-tI(x)}}{\sqrt{2\pi t\eta''(\theta^{\ast})}}\frac{1}{1-e^{-\theta^{\ast}}}
\left(\psi(\theta^{\ast})+\frac{b_{1}}{t}+\frac{b_{2}}{t^{2}}
+\cdots+\frac{b_{v-1}}{t^{v-1}}
+O\left(\frac{1}{t^{v}}\right)\right),
\end{equation}
where $I(x)$ is defined in \eqref{Ifunction} and $\eta'(\theta^{\ast})=x$.
Note that $\eta(\theta)=\nu(x(\theta)-1)$, 
where $x(\theta)=e^{\theta+\Vert h\Vert_{L^{1}}(x(\theta)-1)}$.
Thus,
$\eta'(\theta)=\nu x'(\theta)$,
and
\begin{equation}
x'(\theta)=(1+\Vert h\Vert_{L^{1}}x'(\theta))e^{\theta+\Vert h\Vert_{L^{1}}(x(\theta)-1)}
=(1+\Vert h\Vert_{L^{1}}x'(\theta))x(\theta),
\end{equation}
which implies that 
\begin{equation}
x(\theta)=\frac{x'(\theta)}{1+\Vert h\Vert_{L^{1}}x'(\theta)}.
\end{equation}
Notice that $\eta'(\theta^{\ast})=x$, and thus
$x'(\theta^{\ast})=\frac{\eta'(\theta^{\ast})}{\nu}=\frac{x}{\nu}$,
and
$x(\theta^{\ast})=\frac{x'(\theta^{\ast})}{1+\Vert h\Vert_{L^{1}}x'(\theta^{\ast})}
\frac{x}{\nu+\Vert h\Vert_{L^{1}}x}$.
Therefore,
\begin{equation*}
\frac{x}{\nu}
=x'(\theta^{\ast})
=(1+\Vert h\Vert_{L^{1}}x'(\theta^{\ast}))e^{\theta^{\ast}+\Vert h\Vert_{L^{1}}(x(\theta^{\ast})-1)}
=\left(1+\frac{\Vert h\Vert_{L^{1}}x}{\nu}\right)
e^{\theta^{\ast}+\frac{\Vert h\Vert_{L^{1}}x}{\nu+\Vert h\Vert_{L^{1}}x}-\Vert h\Vert_{L^{1}}},
\end{equation*}
which implies that
$\theta^{\ast}=\log\left(\frac{x}{\nu+\Vert h\Vert_{L^{1}}x}\right)
-\frac{\Vert h\Vert_{L^{1}}x}{\nu+\Vert h\Vert_{L^{1}}x}+\Vert h\Vert_{L^{1}}$.
Hence, we have verified that
\begin{equation}
I(x)=\theta^{\ast}x-\eta(\theta^{\ast})
=x\log\left(\frac{x}{\nu+x\Vert h\Vert_{L^{1}}}\right)-x+x\Vert h\Vert_{L^{1}}+\nu.
\end{equation}

Moreover, by the property of Legendre transform,
$\eta''(\theta^{\ast})=\frac{1}{I''(x)}$,
which together with
\begin{equation}
I'(x)=\theta^{\ast}=\log\left(\frac{x}{\nu+\Vert h\Vert_{L^{1}}x}\right)
-\frac{\Vert h\Vert_{L^{1}}x}{\nu+\Vert h\Vert_{L^{1}}x}+\Vert h\Vert_{L^{1}},
\end{equation}
implies that
\begin{equation}\label{Itwice}
I''(x)=\frac{\nu^{2}}{x(\nu+\Vert h\Vert_{L^{1}}x)^{2}}.
\end{equation}
\end{proof}

%%%%%%%%%%%%%%%%%%%%%%%%%%%%%%%%%%%%%%%%%%%%%%%%%%%%%%%%%%%%%%%

\subsection{Proofs of the Results in Section \ref{MDPSection}}

\begin{proof}[Proof of Theorem \ref{MDPThm}]
Since we have established mod-$\phi$ convergence,
and $N_{t}$ is lattice distributed, 
the result follows from Corollary 3.3.5. in \cite{modphi},
which is restated in our \eqref{precise:3}.

Next, let us show that \eqref{Ii} holds. Recall from \eqref{Itwice}
that $I''(x)=\frac{\nu^{2}}{x(\nu+\Vert h\Vert_{L^{1}}x)^{2}}$.
Therefore, for any $i\geq 2$, by Leibniz formula, 
\begin{align*}
I^{(i)}(x)
&=\nu^{2}\frac{d^{i-2}}{dx^{i-2}}x^{-1}\cdot(\nu+\Vert h\Vert_{L^{1}}x)^{-2}
\\
&=\nu^{2}\sum_{j=0}^{i-2}\frac{(i-2)!}{(i-2-j)!j!}(-1)^{i-2}(i-2-j)!
\Vert h\Vert_{L^{1}}^{j}(j+1)!x^{-1-(i-2-j)}(\nu+\Vert h\Vert_{L^{1}}x)^{-j-2}
\\
&=\nu^{2}(i-2)!(-1)^{i-2}\frac{x^{1-i}}{(\nu+\Vert h\Vert_{L^{1}}x)^{2}}
\sum_{j=0}^{i-2}(j+1)\left(\frac{\Vert h\Vert_{L^{1}}x}{\nu+\Vert h\Vert_{L^{1}}x}\right)^{j}
\\
&=\nu^{2}(i-2)!(-1)^{i-2}\frac{x^{1-i}}{(\nu+\Vert h\Vert_{L^{1}}x)^{2}}
\frac{d}{dy}\sum_{j=0}^{i-2}y^{j+1}\bigg|_{y=\frac{\Vert h\Vert_{L^{1}}x}{\nu+\Vert h\Vert_{L^{1}}x}}
\\
&=\nu^{2}(i-2)!(-1)^{i-2}\frac{x^{1-i}}{(\nu+\Vert h\Vert_{L^{1}}x)^{2}}
\frac{(i-1)y^{i}-iy^{i-1}+1}{(y-1)^{2}}\bigg|_{y=\frac{\Vert h\Vert_{L^{1}}x}{\nu+\Vert h\Vert_{L^{1}}x}},
\end{align*}
which implies \eqref{Ii}.

Finally, let us compute $\eta'(0)$, $\eta''(0)$ for Theorem \ref{MDPThm}
as well as $\eta'''(0)$ for Remark \ref{m:4:remark}.
Let us recall that $\eta(\theta)=\nu(x(\theta)-1)$,
where $x(\theta)=e^{\theta+\Vert h\Vert_{L^{1}}(x(\theta)-1)}$.
Thus, we can compute $x'(\theta)=(1+\Vert h\Vert_{L^{1}}x'(\theta))x(\theta)$
and $x''(\theta)=\left[\Vert h\Vert_{L^{1}}x''(\theta)+(1+\Vert h\Vert_{L^{1}}x'(\theta))^{2}\right]x(\theta)$ and
\begin{align*}
&x'''(\theta)=\left[\Vert h\Vert_{L^{1}}x'''(\theta)
+2(1+\Vert h\Vert_{L^{1}}x'(\theta))\Vert h\Vert_{L^{1}}x''(\theta)\right]
x(\theta)
\\
&\qquad\qquad\qquad
+\left[\Vert h\Vert_{L^{1}}x''(\theta)+(1+\Vert h\Vert_{L^{1}}x'(\theta))^{2}\right]
(1+\Vert h\Vert_{L^{1}}x'(\theta))x(\theta).
\end{align*}
Note that $x(0)=1$. 
By letting $\theta=0$, we get the formulas for $\eta'(0)$, $\eta''(0)$ and $\eta'''(0)$.
\end{proof}

%%%%%%%%%%%%%%%%%%%%%%%%%%%

\subsection{Proof of Proposition \ref{prop:ab} (i): Derivations of $(a_{k})_{k=1}^{\infty}$}

Before we proceed, let us first introduce the Fa\`{a} di Bruno's formula that will
be used repeatedly in our proofs.

\begin{lemma}[Fa\`{a} di Bruno's formula]
\begin{equation}
\frac{d^{n}}{dx^{n}}f(g(x))
=\sum_{\mathcal{S}_{n}}\frac{n!f^{(m_{1}+\cdots+m_{n})}(g(x))}
{m_{1}!1!^{m_{1}}m_{2}!2!^{m_{2}}\cdots m_{n}!n!^{m_{n}}}
\cdot
\prod_{j=1}^{n}(g^{(j)}(x))^{m_{j}},
\end{equation}
where the sum is over the set $\mathcal{S}_{n}$ consisting of 
all the $n$-tuples of non-negative integers $(m_{1},\ldots,m_{n})$
satisfying the constraint
$1\cdot m_{1}+2\cdot m_{2}+3\cdot m_{3}+\cdots+n\cdot m_{n}=n$.
\end{lemma}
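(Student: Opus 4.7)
Since Fa\`a di Bruno's formula is a classical identity in elementary analysis, my plan is the standard derivation via formal Taylor series substitution, which renders the combinatorial factor $\frac{n!}{m_{1}!1!^{m_{1}}m_{2}!2!^{m_{2}}\cdots m_{n}!n!^{m_{n}}}$ transparent. The strategy is to read off the coefficient of $(x-x_{0})^{n}$ on both sides of the Taylor expansion of the composite $f\circ g$ around an arbitrary base point $x_{0}$.

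First, I would fix $x_{0}$, set $y_{0}=g(x_{0})$, and write
\begin{equation*}
g(x)-y_{0}=\sum_{j=1}^{\infty}\frac{g^{(j)}(x_{0})}{j!}(x-x_{0})^{j},
\qquad
f(y)=\sum_{k=0}^{\infty}\frac{f^{(k)}(y_{0})}{k!}(y-y_{0})^{k},
\end{equation*}
and substitute the first into the second. This presents $f(g(x))$ as a double sum whose inner piece is the $k$-th power of the $g$-series.

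Second, I would apply the multinomial theorem to expand $(g(x)-y_{0})^{k}$: each index tuple $(m_{1},m_{2},\ldots)$ with $m_{j}\geq 0$ and $\sum_{j}m_{j}=k$ contributes $\frac{k!}{\prod_{j}m_{j}!}\prod_{j}(g^{(j)}(x_{0})/j!)^{m_{j}}(x-x_{0})^{\sum_{j}jm_{j}}$. Collecting only the monomials $(x-x_{0})^{n}$ forces $\sum_{j}jm_{j}=n$, i.e.\ $(m_{1},\ldots,m_{n})\in\mathcal{S}_{n}$, with $k$ determined by $k=m_{1}+\cdots+m_{n}$ and all $m_{j}$ for $j>n$ equal to zero. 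The resulting coefficient of $(x-x_{0})^{n}$ in $f(g(x))$ is
\begin{equation*}
\sum_{\mathcal{S}_{n}}\frac{f^{(m_{1}+\cdots+m_{n})}(y_{0})}{\prod_{j=1}^{n}m_{j}!(j!)^{m_{j}}}\prod_{j=1}^{n}(g^{(j)}(x_{0}))^{m_{j}},
\end{equation*}
and multiplying by $n!$ yields the right-hand side of the lemma. Since $x_{0}$ was arbitrary, the identity holds pointwise.

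The only point that merits care is the rearrangement of the double sum, but because any fixed order $n$ receives contributions from only finitely many $(k,(m_{j}))$ (namely $k\leq n$ with $(m_{j})\in\mathcal{S}_{n}$), the manipulation is purely algebraic and does not require convergence of the full Taylor series; it can equally be phrased at the level of truncated polynomials. If one wished to avoid formal-series bookkeeping entirely, an alternative route I would mention is induction on $n$ starting from $n=1$ (the chain rule): differentiating the formula for $n-1$ and applying the product and chain rules produces precisely the recurrence on the $\mathcal{S}_{n}$-coefficients corresponding, combinatorially, to either enlarging an existing block of a set partition of $\{1,\ldots,n-1\}$ or adjoining a new singleton block.
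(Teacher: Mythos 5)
Your derivation is correct. The paper itself offers no proof of this lemma: Fa\`a di Bruno's formula is quoted as a classical identity and used as a black box in the Appendix (in the derivations of $(a_{k})$, $(b_{k})$ and of the derivatives of $\eta$ and $\psi$), so there is no argument in the paper to compare yours against. Your route --- substituting the Taylor expansion of $g$ about $x_{0}$ into that of $f$ about $g(x_{0})$, expanding $(g(x)-g(x_{0}))^{k}$ by the multinomial theorem, and reading off the coefficient of $(x-x_{0})^{n}$, which forces $\sum_{j}jm_{j}=n$ and $k=m_{1}+\cdots+m_{n}$ --- is the standard proof and reproduces exactly the combinatorial weight $\frac{n!}{m_{1}!\,1!^{m_{1}}\cdots m_{n}!\,n!^{m_{n}}}$ in the statement. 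You also correctly flag the only delicate point: the formal-series manipulation literally requires analyticity, whereas the identity holds for merely $n$-times differentiable $f,g$; your remarks that the computation can be carried out on Taylor polynomials with Peano remainder, or replaced by induction on $n$ via the chain and product rules (with the set-partition interpretation of the recurrence), adequately close that gap. For the paper's application the functions involved are analytic in a strip, so even the purely formal version would suffice there.
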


We recall from the proof of Theorem~3.2.2. and Remark~3.2.5. in \cite{modphi} that 
for $\mathbb{Z}$-valued random variables $X_{n}$, 
\begin{equation}
\mathbb{P}(X_{n}=t_{n}x)=\frac{e^{-t_{n}F(x)}}{\sqrt{2\pi t_{n}\eta''(\theta^{\ast})}}
\left(\psi(\theta^{\ast})+\frac{a_{1}}{t_{n}}+\cdots+\frac{a_{v-1}}{t_{n}^{v-1}}+O(t_{n}^{-v})\right),
\end{equation}
as $n\rightarrow\infty$, where
$a_{k}=\int_{-\infty}^{\infty}\alpha_{2k}(w)\frac{e^{-\frac{w^{2}}{2}}}{\sqrt{2\pi}}dw$,
where $\alpha_{k}$'s are defined via the expansion:
\begin{equation}
\sum_{k=0}^{2v-1}\frac{\alpha_{k}}{t_{n}^{k/2}}
+O(t_{n}^{-v})
=\sum_{k=0}^{2v-1}\frac{\psi^{(k)}(\theta^{\ast})}{k!}
\left(\frac{iw}{\sqrt{t_{n}\eta''(\theta^{\ast})}}\right)^{k}
e^{-\frac{w^{2}}{\eta''(\theta^{\ast})}\sum_{k=1}^{2v-1}\frac{\eta^{(k+2)}(\theta^{\ast})}{(k+2)!}\left(\frac{iw}{\sqrt{t_{n}\eta''(\theta^{\ast})}}\right)^{k}}.
\end{equation}

Let us define $f(x)=e^{-x}$ and
\begin{equation}
g(x)=\frac{w^{2}}{\eta''(\theta^{\ast})}\sum_{k=1}^{\infty}\frac{\eta^{(k+2)}(\theta^{\ast})}{(k+2)!}x^{k}.
\end{equation}
Then we can compute that  $g(0)=0$ and
for every $j\in\mathbb{N}$,
\begin{equation}
g^{(j)}(0)=\frac{w^{2}}{\eta''(\theta^{\ast})}\frac{\eta^{(j+2)}(\theta^{\ast})}{(j+2)(j+1)}.
\end{equation}
By Fa\`{a} di Bruno's formula, it follows that
\begin{equation}
f(g(x))=\sum_{n=0}^{\infty}
\sum\frac{n!(-1)^{m_{1}+\cdots+m_{n}}}{m_{1}!1!^{m_{1}}m_{2}!2!^{m_{2}}\cdots m_{n}!n!^{m_{n}}}
\cdot
\prod_{j=1}^{n}\left(\frac{w^{2}}{\eta''(\theta^{\ast})}\frac{\eta^{(j+2)}(\theta^{\ast})}{(j+2)(j+1)}\right)^{m_{j}}
\frac{x^{n}}{n!}.
\end{equation}
It follows that
\begin{align*}
\frac{\alpha_{k}}{t_{n}^{k/2}}
&=\sum_{\ell=0}^{k}\frac{\psi^{(k-\ell)}(\theta^{\ast})}{(k-\ell)!}
\sum_{\mathcal{S}_{\ell}}\frac{(-1)^{m_{1}+\cdots+m_{\ell}}}{m_{1}!1!^{m_{1}}m_{2}!2!^{m_{2}}\cdots m_{\ell}!\ell!^{m_{\ell}}}
\\
&\qquad\qquad\qquad
\cdot\prod_{j=1}^{\ell}\left(\frac{w^{2}}{\eta''(\theta^{\ast})}\frac{\eta^{(j+2)}(\theta^{\ast})}{(j+2)(j+1)}\right)^{m_{j}}
\frac{(iw)^{k}}{(\eta''(\theta^{\ast}))^{k/2}},
\nonumber
\end{align*}
which implies that
\begin{align}
\alpha_{k}(w)
%&=\sum_{\ell=0}^{k}\frac{\psi^{(k-\ell)}(\theta^{\ast})}{(k-\ell)!}
%\sum_{\mathcal{S}_{\ell}}\frac{(-1)^{m_{1}+\cdots+m_{\ell}}}{m_{1}!1!^{m_{1}}m_{2}!2!^{m_{2}}\cdots m_{\ell}!\ell!^{m_{\ell}}}
%\\
%&\qquad\qquad\qquad
%\cdot\prod_{j=1}^{\ell}\left(\frac{w^{2}}{\eta''(\theta^{\ast})}\frac{\eta^{(j+2)}(\theta^{\ast})}{(j+2)(j+1)}\right)^{m_{j}}
%\frac{(iw)^{k}}{(\eta''(\theta^{\ast}))^{k/2}}.
%\nonumber
%\\
&=\sum_{\ell=0}^{k}\frac{\psi^{(k-\ell)}(\theta^{\ast})}{(k-\ell)!}
\sum_{\mathcal{S}_{\ell}}\frac{(-1)^{m_{1}+\cdots+m_{\ell}}}{m_{1}!1!^{m_{1}}m_{2}!2!^{m_{2}}\cdots m_{\ell}!\ell!^{m_{\ell}}}
\nonumber
\\
&\qquad\qquad\qquad
\cdot\prod_{j=1}^{\ell}\left(\frac{1}{\eta''(\theta^{\ast})}\frac{\eta^{(j+2)}(\theta^{\ast})}{(j+2)(j+1)}\right)^{m_{j}}
\frac{(i)^{k}w^{k+2(m_{1}+\cdots+m_{\ell})}}{(\eta''(\theta^{\ast}))^{k/2}}.
\nonumber
\end{align}
By the property of standard normal random variable,
\begin{equation}
\int_{-\infty}^{\infty}w^{m}\cdot\frac{e^{-\frac{w^{2}}{2}}}{\sqrt{2\pi}}dw
=(m-1)!!,
\end{equation}
if $m$ is even and $0$ if $m$ is odd.
Hence, the formula for $a_{k}$ follows.
%we conclude that
%\begin{align}
%a_{k}&=\sum_{\ell=0}^{2k}\frac{\psi^{(2k-\ell)}(\theta^{\ast})}{(2k-\ell)!}
%\sum_{\mathcal{S}_{\ell}}\frac{(-1)^{m_{1}+\cdots+m_{\ell}}}{m_{1}!1!^{m_{1}}m_{2}!2!^{m_{2}}\cdots m_{\ell}!\ell!^{m_{\ell}}}
%\\
%&\qquad\qquad\qquad
%\cdot\prod_{j=1}^{\ell}\left(\frac{1}{\eta''(\theta^{\ast})}\frac{\eta^{(j+2)}(\theta^{\ast})}{(j+2)(j+1)}\right)^{m_{j}}
%\frac{(-1)^{k}(2(k+m_{1}+\cdots+m_{\ell})-1)!!}{(\eta''(\theta^{\ast}))^{k}}.
%\nonumber
%\end{align}

%%%%%%%%%%%%%%%%%%%%%%%%%%%%%%%%%%%%%%%%%%%%

\subsection{Proof of Proposition \ref{prop:ab} (ii): Derivations of $(b_{k})_{k=1}^{\infty}$}

We recall from the proof of Theorem~3.2.2. and Remark~3.2.5. in \cite{modphi} that for integer-valued random variables $X_n$,
\begin{equation}
\mathbb{P}(X_{n}\geq t_{n}x)
=\frac{e^{-t_{n}F(x)}}{\sqrt{2\pi t_{n}\eta''(\theta^{\ast})}}\frac{1}{1-e^{-\theta^{\ast}}}
\left(\psi(\theta^{\ast})+\frac{b_{1}}{t_{n}}+\frac{b_{2}}{t_{n}^{2}}
+\cdots+\frac{b_{v-1}}{t_{n}^{v-1}}
+O\left(\frac{1}{t_{n}^{v}}\right)\right),
\end{equation}
where 
$b_{k}=\int_{\mathbb{R}}\beta_{2k}(w)\frac{e^{-\frac{w^{2}}{2}}}{\sqrt{2\pi}}dw$,
where
$\beta_{j}(w)$ are given via the expansion:
\begin{equation}
g_{n}(w)=\sum_{k=0}^{2\nu-1}\frac{\beta_{k}(w)}{(t_{n})^{k/2}}
+O(t_{n}^{-v}),
\end{equation}
where
\begin{align*}
g_{n}(w)&:=\sum_{k=0}^{\infty}e^{-k\theta^{\ast}}e^{-k\frac{iw}{\sqrt{t_{n}\eta''(\theta^{\ast})}}}
\cdot
\psi\left(\theta^{\ast}+\frac{iw}{\sqrt{t_{n}\eta''(\theta^{\ast})}}\right)
\\
&\qquad\qquad\qquad\qquad
\cdot
e^{t_{n}\left(\eta\left(\theta^{\ast}+\frac{iw}{\sqrt{t_{n}\eta''(\theta^{\ast})}}\right)
-\eta(\theta^{\ast})-\eta'(\theta^{\ast})\frac{iw}{\sqrt{t_{n}\eta''(\theta^{\ast})}}+\frac{w^{2}}{2t_{n}}\right)}
\\
&=\frac{\psi\left(\theta^{\ast}+\frac{iw}{\sqrt{t_{n}\eta''(\theta^{\ast})}}\right)}{1-e^{-\theta^{\ast}-\frac{iw}{\sqrt{t_{n}\eta''(\theta^{\ast})}}}}
\cdot
e^{t_{n}\left(\eta\left(\theta^{\ast}+\frac{iw}{\sqrt{t_{n}\eta''(\theta^{\ast})}}\right)
-\eta(\theta^{\ast})-\eta'(\theta^{\ast})\frac{iw}{\sqrt{t_{n}\eta''(\theta^{\ast})}}+\frac{w^{2}}{2t_{n}}\right)}.
\nonumber
\end{align*}
Note that
\begin{equation}
\sum_{k=0}^{2\nu-1}\frac{\beta_{k}(w)}{(t_{n})^{k/2}}
+O(t_{n}^{-v})
=\frac{1}{1-e^{-\theta^{\ast}-\frac{iw}{\sqrt{t_{n}\eta''(\theta^{\ast})}}}}
\sum_{k=0}^{2\nu-1}\frac{\alpha_{k}(w)}{(t_{n})^{k/2}}.
\end{equation}
We define 
$f(x)=\frac{1}{1-e^{-\theta^{\ast}}x}$,
and 
$g(x)=e^{-x}$.
Then, 
\begin{equation}
f(g(x))=\sum_{n=0}^{\infty}
\sum_{n=0}^{\infty}\frac{d^{n}}{dx^{n}}f(g(0))\frac{x^{n}}{n!},
\end{equation}
where we can compute that 
$f^{(k)}(x)=e^{-k\theta^{\ast}}k!(1-e^{-\theta^{\ast}}x)^{-k-1}$,
and $g^{(k)}(x)=(-1)^{k}e^{-x}$, and
by Fa\`{a} di Bruno's formula, we get
\begin{equation}
\frac{d^{n}}{dx^{n}}f(g(0))
=\sum_{\mathcal{S}_{n}}\frac{n!e^{-\sum_{j=1}^{n}m_{j}\theta^{\ast}}\left(\sum_{j=1}^{n}m_{j}\right)!
(1-e^{-\theta^{\ast}})^{-\sum_{j=1}^{n}m_{j}-1}}
{m_{1}!1!^{m_{1}}m_{2}!2!^{m_{2}}\cdots m_{n}!n!^{m_{n}}}
\cdot
\prod_{j=1}^{n}(-1)^{j\cdot m_{j}}.
\end{equation}
By the formula for $\alpha_{k}(w)$, we have
\begin{align}
\beta_{k}(w)&=
\sum_{n=0}^{k}
\sum_{\mathcal{S}_{n}}\frac{e^{-\sum_{j=1}^{n}m_{j}\theta^{\ast}}\left(\sum_{j=1}^{n}m_{j}\right)!
(1-e^{-\theta^{\ast}})^{-\sum_{j=1}^{n}m_{j}-1}}
{m_{1}!1!^{m_{1}}m_{2}!2!^{m_{2}}\cdots m_{n}!n!^{m_{n}}}
\cdot
\prod_{j=1}^{n}(-1)^{j\cdot m_{j}}
\\
&\qquad
\cdot
\sum_{\ell=0}^{k-n}\frac{\psi^{(k-n-\ell)}(\theta^{\ast})}{(k-n-\ell)!}
\sum_{\mathcal{S}_{\ell}}\frac{(-1)^{m_{1}+\cdots+m_{\ell}}}{m_{1}!1!^{m_{1}}m_{2}!2!^{m_{2}}\cdots m_{\ell}!\ell!^{m_{\ell}}}
\nonumber
\\
&\qquad\qquad\qquad
\cdot\prod_{j=1}^{\ell}\left(\frac{1}{\eta''(\theta^{\ast})}\frac{\eta^{(j+2)}(\theta^{\ast})}{(j+2)(j+1)}\right)^{m_{j}}
\frac{(i)^{k}w^{k+2(m_{1}+\cdots+m_{\ell})}}{(\eta''(\theta^{\ast}))^{k/2}}.
\nonumber
\end{align}
Hence, the formula for $b_{k}$ follows. 
%we conclude that
%\begin{align}
%b_{k}&=
%\sum_{n=0}^{2k}
%\sum_{\mathcal{S}_{n}}\frac{e^{-(m_{1}+\cdots+m_{n})\theta^{\ast}}(m_{1}+\cdots+m_{n})!(1-e^{-\theta^{\ast}})^{-(m_{1}+\cdots+m_{n})-1}}
%{m_{1}!1!^{m_{1}}m_{2}!2!^{m_{2}}\cdots m_{n}!n!^{m_{n}}}
%\cdot
%\prod_{j=1}^{n}(-1)^{j\cdot m_{j}}
%\\
%&\qquad
%\cdot
%\sum_{\ell=0}^{2k-n}\frac{\psi^{(2k-n-\ell)}(\theta^{\ast})}{(2k-n-\ell)!}
%\sum_{\mathcal{S}_{\ell}}\frac{(-1)^{m_{1}+\cdots+m_{\ell}}}{m_{1}!1!^{m_{1}}m_{2}!2!^{m_{2}}\cdots m_{\ell}!\ell!^{m_{\ell}}}
%\nonumber
%\\
%&\qquad\qquad\qquad
%\cdot\prod_{j=1}^{\ell}\left(\frac{1}{\eta''(\theta^{\ast})}\frac{\eta^{(j+2)}(\theta^{\ast})}{(j+2)(j+1)}\right)^{m_{j}}
%\frac{(-1)^{k}(2(k+m_{1}+\cdots+m_{\ell})-1)!!}{(\eta''(\theta^{\ast}))^{k}}.
%\nonumber
%\end{align}

%%%%%%%%%%%%%%%%%%%%%%%%%%%%%%%%%%%%%%%%%%%%%%%
\subsection{Proof of Proposition \ref{prop:etapsi} (i): Computations of $\eta^{(k)}(\theta^{\ast})$}

We recall that $\eta(\theta)=\nu(x(\theta)-1)$ so that
\begin{equation}
\eta^{(k)}(\theta^{\ast})=\nu x^{(k)}(\theta^{\ast}),\qquad k\in\mathbb{N}.
\end{equation}
Note that
\begin{equation}
x(\theta)=e^{\theta+\Vert h\Vert_{L^{1}}(x(\theta)-1)}
=e^{\theta-\Vert h\Vert_{L^{1}}}e^{\Vert h\Vert_{L^{1}}x(\theta)}.
\end{equation}
Therefore, by Leibniz formula, 
\begin{equation}
x^{(k)}(\theta^{\ast})
=\sum_{\ell=0}^{k}\binom{k}{\ell}e^{\theta^{\ast}-\Vert h\Vert_{L^{1}}}
\frac{d^{\ell}}{d\theta^{\ell}}e^{\Vert h\Vert_{L^{1}}x(\theta)}\bigg|_{\theta=\theta^{\ast}}.
\end{equation}
By Fa\`{a} di Bruno's formula, we get
\begin{equation}
\frac{d^{\ell}}{d\theta^{\ell}}e^{\Vert h\Vert_{L^{1}}x(\theta)}\bigg|_{\theta=\theta^{\ast}}
=\sum_{\mathcal{S}_{\ell}}\frac{\ell!\cdot\Vert h\Vert_{L^{1}}^{m_{1}+\cdots+m_{\ell}}
\cdot e^{\Vert h\Vert_{L^{1}}x(\theta^{\ast})}}{m_{1}!1!^{m_{1}}m_{2}!2!^{m_{2}}\cdots m_{\ell}!\ell!^{m_{\ell}}}
\cdot
\prod_{j=1}^{\ell}(x^{(j)}(\theta^{\ast}))^{m_{j}}.
\end{equation}
Hence, 
\begin{align}
x^{(k)}(\theta^{\ast})
&=e^{\theta^{\ast}-\Vert h\Vert_{L^{1}}}
\frac{d^{k}}{d\theta^{k}}e^{\Vert h\Vert_{L^{1}}x(\theta)}\bigg|_{\theta=\theta^{\ast}}
+\sum_{\ell=0}^{k-1}\binom{k}{\ell}e^{\theta^{\ast}-\Vert h\Vert_{L^{1}}}
\frac{d^{\ell}}{d\theta^{\ell}}e^{\Vert h\Vert_{L^{1}}x(\theta)}\bigg|_{\theta=\theta^{\ast}}
\\
&=e^{\theta^{\ast}-\Vert h\Vert_{L^{1}}}
\Vert h\Vert_{L^{1}}
\cdot e^{\Vert h\Vert_{L^{1}}x(\theta^{\ast})}
\cdot x^{(k)}(\theta^{\ast})
\nonumber
\\
&\qquad
+e^{\theta^{\ast}-\Vert h\Vert_{L^{1}}}
\sum_{\mathcal{T}_{k}}\frac{k!\cdot\Vert h\Vert_{L^{1}}^{m_{1}+\cdots+m_{k-1}}
\cdot e^{\Vert h\Vert_{L^{1}}x(\theta^{\ast})}}{m_{1}!1!^{m_{1}}m_{2}!2!^{m_{2}}\cdots m_{k-1}!(k-1)!^{m_{k-1}}}
\cdot
\prod_{j=1}^{k-1}(x^{(j)}(\theta^{\ast}))^{m_{j}}
\nonumber
\\
&\qquad
+\sum_{\ell=0}^{k-1}\binom{k}{\ell}e^{\theta^{\ast}-\Vert h\Vert_{L^{1}}}
\sum_{\mathcal{S}_{\ell}}\frac{\ell!\cdot\Vert h\Vert_{L^{1}}^{m_{1}+\cdots+m_{\ell}}
\cdot e^{\Vert h\Vert_{L^{1}}x(\theta^{\ast})}}{m_{1}!1!^{m_{1}}m_{2}!2!^{m_{2}}\cdots m_{\ell}!\ell!^{m_{\ell}}}
\cdot
\prod_{j=1}^{\ell}(x^{(j)}(\theta^{\ast}))^{m_{j}},
\nonumber
\end{align}
where $\mathcal{T}_{k}$ denotes the set of $(k-1)$-tuples
of non-negative integers $(m_{1},\ldots,m_{k-1})$ satisfying the constraint
$1\cdot m_{1}+2\cdot m_{2}+3\cdot m_{3}+\cdots+(k-1)\cdot m_{k-1}=k$.
Note that $x(\theta^{\ast})=e^{\theta+\Vert h\Vert_{L^{1}}(x(\theta^{\ast})-1)}$.
Also that $\theta^{\ast}=\arg\max_{\theta\geq 0}\{\theta x-\nu(x(\theta)-1)\}$.
Thus, $x=\nu x'(\theta^{\ast})$, which gives $x'(\theta^{\ast})=\frac{x}{\nu}$ and
$x(\theta^{\ast})=\frac{x}{\nu+\Vert h\Vert_{L^{1}}x}$.

For $k\geq 1$, $x^{(k)}(\theta^{\ast})$ can be computed recursively as:
\begin{align}
x^{(k)}(\theta^{\ast})
&=\frac{x(\theta^{\ast})}{1-\Vert h\Vert_{L^{1}}x(\theta^{\ast})}
\sum_{\mathcal{T}_{k}}\frac{k!\cdot\Vert h\Vert_{L^{1}}^{m_{1}+\cdots+m_{k-1}}}{m_{1}!1!^{m_{1}}m_{2}!2!^{m_{2}}\cdots m_{k-1}!(k-1)!^{m_{k-1}}}
\cdot
\prod_{j=1}^{k-1}(x^{(j)}(\theta^{\ast}))^{m_{j}}
\nonumber
\\
&\quad
+\frac{x(\theta^{\ast})}{1-\Vert h\Vert_{L^{1}}x(\theta^{\ast})}\sum_{\ell=0}^{k-1}\binom{k}{\ell}
\sum_{\mathcal{S}_{\ell}}\frac{\ell!\cdot\Vert h\Vert_{L^{1}}^{m_{1}+\cdots+m_{\ell}}}{m_{1}!1!^{m_{1}}m_{2}!2!^{m_{2}}\cdots m_{\ell}!\ell!^{m_{\ell}}}
\cdot
\prod_{j=1}^{\ell}(x^{(j)}(\theta^{\ast}))^{m_{j}}.
\nonumber
\end{align}

%%%%%%%%%%%%%%%%%%%%%%%%%%%%%%%%%%%%%%%%%%%%%%%
\subsection{Proof of Proposition \ref{prop:etapsi} (ii): Computations of $\psi^{(k)}(\theta^{\ast})$}

Let us recall that
\begin{equation}
\psi(\theta)=e^{\nu\int_{0}^{\infty}[F(s;\theta)-x(\theta)]ds}.
\end{equation}
where $F(\cdot;\theta)$ satisfies:
$F(t;\theta)=e^{\theta+\int_{0}^{t}(F(t-s;\theta)-1)h(s)ds}$.
Let $F^{(k)}(\cdot;\theta)$ denote the $k$-th partial derivative of $F(\cdot;\theta)$
w.r.t. $\theta$. By Fa\`{a} di Bruno's formula, we have
\begin{equation}
\psi^{(k)}(\theta^{\ast})
=\sum_{\mathcal{S}_{k}}\frac{k!\cdot\nu^{m_{1}+\cdots+m_{k}}\cdot\psi(\theta^{\ast})}
{m_{1}!1!^{m_{1}}m_{2}!2!^{m_{2}}\cdots m_{k}!k!^{m_{k}}}
\cdot
\prod_{j=1}^{k}\left(\int_{0}^{\infty}\left[F^{(j)}(s;\theta^{\ast})-x^{(j)}(\theta^{\ast})\right]ds\right)^{m_{j}}.
\end{equation}
Moreover, by Leibniz formula, we can compute that
\begin{equation}
F^{(k)}(t;\theta^{\ast})
=\sum_{\ell=0}^{k}\binom{k}{\ell}e^{\theta^{\ast}-\int_{0}^{t}h(s)ds}
\frac{d^{\ell}}{d\theta^{\ell}}e^{\int_{0}^{t}F(t-s;\theta)h(s)ds}\bigg|_{\theta=\theta^{\ast}}.
\end{equation}
By Fa\`{a} di Bruno's formula, similar to deriving the formulas for $x^{(k)}(\theta^{\ast})$, 
we get \eqref{F:recursive}.
%\begin{align}
%&F^{(k)}(t;\theta^{\ast})
%\\
%&=F(t;\theta^{\ast})\cdot
%\int_{0}^{t}F^{(k)}(t-s;\theta^{\ast})h(s)ds
%\nonumber
%\\
%&\quad
%+F(t;\theta^{\ast})\cdot
%\sum_{\mathcal{T}_{k}}\frac{k!}
%{m_{1}!1!^{m_{1}}\cdots m_{k-1}!(k-1)!^{m_{k-1}}}
%\prod_{j=1}^{k-1}\left(\int_{0}^{t}F^{(j)}(t-s;\theta^{\ast})h(s)ds\right)^{m_{j}}
%\nonumber
%\\
%&\quad
%+F(t;\theta^{\ast})\cdot\sum_{\ell=0}^{k-1}\binom{k}{\ell}
%\sum_{\mathcal{S}_{\ell}}\frac{\ell!}
%{m_{1}!1!^{m_{1}}\cdots m_{\ell}!\ell!^{m_{\ell}}}
%\prod_{j=1}^{\ell}\left(\int_{0}^{t}F^{(j)}(t-s;\theta^{\ast})h(s)ds\right)^{m_{j}}.
%\nonumber
%\end{align}

%%%%%%%%%%%%%%%%%%%%%%%%%%%%%%%%%%%%%%

\subsection{Computations of $a_{1}$ and $b_{1}$ for Theorem \ref{LDPThm}}

By Proposition \ref{prop:ab}, we can compute that
\begin{align}
&a_{1}=-\frac{1}{2}\frac{\psi''(\theta^{\ast})}{\eta''(\theta^{\ast})}
+\frac{1}{24}\frac{\psi(\theta^{\ast})\eta^{(4)}(\theta^{\ast})+4\psi'(\theta^{\ast})\eta^{(3)}(\theta^{\ast})}{(\eta''(\theta^{\ast}))^{2}}
-\frac{15}{72}\frac{\psi(\theta^{\ast})(\eta^{(3)}(\theta^{\ast}))^{2}}{(\eta''(\theta^{\ast}))^{3}},
\\
&b_{1}
=-\psi(\theta^{\ast})\frac{1}{2}\frac{e^{-\theta^{\ast}}+e^{-2\theta^{\ast}}}{(1-e^{-\theta^{\ast}})^{2}}\frac{1}{\eta''(\theta^{\ast})}
-\frac{1}{2}\psi''(\theta^{\ast})\frac{1}{\eta''(\theta^{\ast})}
\nonumber
\\
&\qquad\qquad
+
\psi(\theta^{\ast})\left[\eta^{(4)}(\theta^{\ast})\frac{1}{24}\frac{3}{(\eta''(\theta^{\ast}))^{2}}
-\frac{1}{2}(\eta^{(3)}(\theta^{\ast}))^{2}\frac{1}{36}\frac{15}{(\eta''(\theta^{\ast}))^{3}}\right]
\nonumber
\\
&\qquad
+\frac{e^{-\theta^{\ast}}}{1-e^{-\theta^{\ast}}}\frac{\psi'(\theta^{\ast})}{\eta''(\theta^{\ast})}
+\psi'(\theta^{\ast})\eta^{(3)}(\theta^{\ast})\frac{1}{2(\eta''(\theta^{\ast}))^{2}}
-\frac{e^{-\theta^{\ast}}}{1-e^{-\theta^{\ast}}}\eta^{(3)}(\theta^{\ast})\frac{1}{2(\eta''(\theta^{\ast}))^{2}}.
\end{align}
Thus to obtain $a_{1}$ and $b_{1}$, we need to compute $\psi(\theta^{\ast})$, $\psi'(\theta^{\ast})$, $\psi''(\theta^{\ast})$,
$\eta''(\theta^{\ast})$, $\eta^{(3)}(\theta^{\ast})$ and $\eta^{(4)}(\theta^{\ast})$
as provided in Proposition \ref{prop:etapsi}.
We recall that
$\psi(\theta)=e^{\nu\int_{0}^{\infty}[F(s;\theta)-x(\theta)]ds}$.
Thus, 
\begin{align}
&\psi'(\theta^{\ast})=\nu\int_{0}^{\infty}[F^{(1)}(s;\theta^{\ast})-x'(\theta^{\ast})]ds\cdot\psi(\theta^{\ast}),
\\
&\psi''(\theta^{\ast})=\left[\left(\nu\int_{0}^{\infty}[F^{(1)}(s;\theta^{\ast})-x'(\theta^{\ast})]ds\right)^{2}
+\nu\int_{0}^{\infty}[F^{(2)}(s;\theta^{\ast})-x''(\theta^{\ast})]ds\right]\cdot\psi(\theta^{\ast}).
\end{align}
We first recall that:
\begin{equation}\label{F:theta:star}
F(t;\theta^{\ast})=e^{\theta^{\ast}+\int_{0}^{t}(F(t-s;\theta^{\ast})-1)h(s)ds},\qquad t\geq 0.
\end{equation}
By differentiating \eqref{F:theta:star} w.r.t. $\theta^{\ast}$
and applying \eqref{F:theta:star} again, we obtain:
\begin{equation}\label{F:theta:star:2}
F^{(1)}(t;\theta^{\ast})=
\left[1+\int_{0}^{t}F^{(1)}(t-s;\theta^{\ast})h(s)ds\right]F(t;\theta^{\ast}),\qquad t\geq 0.
\end{equation}
Finally by differentiating \eqref{F:theta:star:2} w.r.t. $\theta^{\ast}$
and applying \eqref{F:theta:star:2} again , we obtain:
\begin{equation}
F^{(2)}(t;\theta^{\ast})=
\left[
\int_{0}^{t}F^{(2)}(t-s;\theta^{\ast})h(s)ds
+\left(1+\int_{0}^{t}F^{(1)}(t-s;\theta^{\ast})h(s)ds\right)^{2}\right]F(t;\theta^{\ast}),
\end{equation}
for any $t\geq 0$.
Let us recall that $\eta(\theta^{\ast})=\nu(x(\theta^{\ast})-1)$ and 
\begin{equation}
\eta''(\theta^{\ast})=\nu x''(\theta^{\ast}),
\qquad
\eta^{(3)}(\theta^{\ast})=\nu x^{(3)}(\theta^{\ast}),
\qquad
\eta^{(4)}(\theta^{\ast})=\nu x^{(4)}(\theta^{\ast}).
\end{equation}
We can compute from the formulas in Proposition \ref{prop:etapsi} that
\begin{align}
&x'(\theta)=\frac{x(\theta)}{1-\Vert h\Vert_{L^{1}}x(\theta)},
\qquad
x''(\theta)=\frac{(1+\Vert h\Vert_{L^{1}}x'(\theta))^{2}x(\theta)}{1-\Vert h\Vert_{L^{1}}x(\theta)},
\\
&x^{(3)}(\theta)=\frac{\left(3(1+\Vert h\Vert_{L^{1}}x'(\theta))\Vert h\Vert_{L^{1}}x''(\theta)
+(1+\Vert h\Vert_{L^{1}}x'(\theta))^{3}\right)x(\theta)}{1-\Vert h\Vert_{L^{1}}x(\theta)},
\\
&x^{(4)}(\theta)=\frac{\left(3(\Vert h\Vert_{L^{1}}x''(\theta))^{2}
+3(1+\Vert h\Vert_{L^{1}}x'(\theta))\Vert h\Vert_{L^{1}}x^{(3)}(\theta)\right)x(\theta)}
{1-\Vert h\Vert_{L^{1}}x(\theta)}
\\
&\qquad\qquad\qquad
+\frac{\left(6\Vert h\Vert_{L^{1}}(1+\Vert h\Vert_{L^{1}}x'(\theta))^{2}x''(\theta)+(1+\Vert h\Vert_{L^{1}}x'(\theta))^{4}\right)x(\theta)}{1-\Vert h\Vert_{L^{1}}x(\theta)}.
\nonumber
\end{align}
Thus $x'(\theta^{\ast})$, $x''(\theta^{\ast})$, $x^{(3)}(\theta^{\ast})$ and $x^{(4)}(\theta^{\ast})$
can be computed from $x(\theta^{\ast})$.
Note that $\theta^{\ast}=\arg\max_{\theta\geq 0}\{\theta x-\nu(x(\theta)-1)\}$.
Thus, $x=\nu x'(\theta^{\ast})$, which gives $x'(\theta^{\ast})=\frac{x}{\nu}$ and
$x(\theta^{\ast})=\frac{x}{\nu+\Vert h\Vert_{L^{1}}x}$.
Hence, we get
\begin{align}
&x'(\theta^{\ast})=\frac{x}{\nu},
\qquad
x''(\theta^{\ast})=\left(1+\frac{\Vert h\Vert_{L^{1}}x}{\nu}\right)^{2}\frac{x}{\nu},
\\
&x^{(3)}(\theta^{\ast})=\left(1+\frac{\Vert h\Vert_{L^{1}}x}{\nu}\right)^{3}\left(1+\frac{3\Vert h\Vert_{L^{1}}x}{\nu}\right)\frac{x}{\nu},
\\
&x^{(4)}(\theta^{\ast})=3\left(1+\frac{\Vert h\Vert_{L^{1}}x}{\nu}\right)^{4}\left(\left(\frac{\Vert h\Vert_{L^{1}}x}{\nu}\right)^{2}
+\left(1+\frac{3\Vert h\Vert_{L^{1}}x}{\nu}\right)\frac{\Vert h\Vert_{L^{1}}x}{\nu}\right)\frac{x}{\nu}
\\
&\qquad\qquad\qquad
+\left(1+\frac{\Vert h\Vert_{L^{1}}x}{\nu}\right)^{4}
\left(1+\frac{6\Vert h\Vert_{L^{1}}x}{\nu}\right)\frac{x}{\nu}.
\nonumber
\end{align}

%%%%%%%%%%%%%%%%%%%%%%%%%%%%%%%%%
\subsection{Proof of Proposition \ref{Prop:IS}}
%%%%%%%%%%%%%%%%%%%%%%%%%%%%%%%%%%%%%%%%%%%%%%%%%
\begin{proof}[Proof of Proposition \ref{Prop:IS}]
By change of measure for counting processes, see e.g. \cite{Sokol} 
\begin{align}
\mathbb{E}[1_{\{N_{t}\geq xt\}}]
&=\hat{\mathbb{E}}\left[1_{\{N_{t}\geq xt\}}\frac{d\mathbb{P}}{d\hat{\mathbb{P}}}\bigg|_{\mathcal{F}_{t}}\right]
\\
&=\hat{\mathbb{E}}\left[1_{\{N_{t}\geq xt\}}
\cdot 
e^{\int_{0}^{t}\log\left(\frac{\lambda_{s}}{\hat{\lambda}_{s}}\right)dN_{s}
-\int_{0}^{t}(\lambda_{s}-\hat{\lambda}_{s})ds}\right]
\nonumber
\\
&=\hat{\mathbb{E}}\left[1_{\{N_{t}\geq xt\}}
\cdot 
e^{-(\log\gamma)N_{t}
+(\gamma-1)\int_{0}^{t}\lambda_{s}ds}\right]
\nonumber
\\
&=\hat{\mathbb{E}}\left[1_{\{N_{t}\geq xt\}}
\cdot 
e^{-(\log\gamma)N_{t}
+(\gamma-1)\nu t+(\gamma-1)\int_{0}^{t}\int_{0}^{s-}h(s-u)dN_{u}ds}\right]
\nonumber
\\
&=\hat{\mathbb{E}}\left[1_{\{N_{t}\geq xt\}}
\cdot 
e^{-(\log\gamma)N_{t}
+(\gamma-1)\nu t+(\gamma-1)\int_{0}^{t-}[\int_{u}^{t}h(s-u)ds]dN_{u}}\right]
\nonumber
\\
&=\hat{\mathbb{E}}\left[1_{\{N_{t}\geq xt\}}
\cdot 
e^{-(\log\gamma)N_{t}
+(\gamma-1)\nu t+(\gamma-1)\Vert h\Vert_{L^{1}}N_{t}
-(\gamma-1)\int_{0}^{t}H(t-u)dN_{u}}\right]
\nonumber
\\
&=e^{-tI(x)}\hat{\mathbb{E}}\left[1_{\{N_{t}\geq xt\}}
\cdot 
e^{((\gamma-1)\Vert h\Vert_{L^{1}}-\log\gamma)(N_{t}-xt)-(\gamma-1)\int_{0}^{t}H(t-u)dN_{u}}\right],
\nonumber
\end{align}
where we recall that $H(t)=\int_{t}^{\infty}h(s)ds$ denotes
the right tail of the exciting function.
\end{proof}

%%%%%%%%%%%%%%%%%%%%%%%%%%%%%%%%%%%%%%%%%%%%%%%%%%%%%%%%%%%%%%%%%%%%%%%%%%%%%%%%%%%%%%%
\section*{Acknowledgements}
The authors thank the AE and an anonymous referee for helpful suggestions, 
which greatly improves the quality of the paper.
The authors are grateful to Jayaram Sethuraman and S. R. S. Varadhan for helpful comments.
The authors also thank Shujian Liao for the help of numerical results.
Fuqing Gao acknowledges support from NSFC Grant 11571262 and 11731012.
Lingjiong Zhu is grateful to the support from NSF Grant DMS-1613164.

%%%%%%%%%%%%%%%%%%%%%%%%%%%%%%%%%%%%%%%%%%%%%%%%%%%%%%%%%%%%%%%%%%%%

\end{document}